\providecommand{\U}[1]{\protect \rule{.1in}{.1in}}
\newtheorem{theorem}{Theorem}[section]
\newtheorem{corollary}[theorem]{Corollary}
\newtheorem{lemma}[theorem]{Lemma}
\theoremstyle{remark}
\newtheorem{remark}[theorem]{Remark}
\numberwithin{equation}{section}
\begin{document}
\title[existence of pre-modular forms]{The geometry of generalized Lam\'{e} equation, II: Existence of pre-modular forms and application}
\author{Zhijie Chen}
\address{Department of Mathematical Sciences, Yau Mathematical Sciences Center,
Tsinghua University, Beijing, 100084, China }
\email{zjchen2016@tsinghua.edu.cn}
\author{Ting-Jung Kuo}
\address{Department of Mathematics, National Taiwan Normal University, Taipei 11677, Taiwan }
\email{tjkuo1215@ntnu.edu.tw}
\author{Chang-Shou Lin}
\address{Taida Institute for Mathematical Sciences (TIMS), Center for Advanced Study in
Theoretical Sciences (CASTS), National Taiwan University, Taipei 10617, Taiwan }
\email{cslin@math.ntu.edu.tw}

\begin{abstract}
In this paper, the second in a series, we continue to study the
generalized Lam\'{e} equation with the Treibich-Verdier potential
\begin{equation*}
y^{\prime \prime }(z)=\bigg[  \sum_{k=0}^{3}n_{k}(n_{k}+1)\wp(z+\tfrac{
\omega_{k}}{2}|\tau)+B\bigg]  y(z),\quad n_{k}\in \mathbb{Z}_{\geq0}
\end{equation*}
from the monodromy aspect.
We prove the existence of a pre-modular form $Z_{r,s}^{\mathbf{n}}(\tau)$ of weight $\frac{1}{2}\sum n_k(n_k+1)$ such that the monodromy data $(r,s)$ is characterized by $Z_{r,s}^{\mathbf{n}}(\tau)=0$. This generalizes the result in \cite{LW2}, where the Lam\'{e} case (i.e. $n_1=n_2=n_3=0$) was studied by Wang and the third author. As applications, we prove among other things that the following two mean field equations
\[\Delta u+e^u=16\pi\delta_{0}\quad\text{and}\quad \Delta u+e^u=8\pi\sum_{k=1}^3\delta_{\frac{\omega_k}{2}}\]
on a flat torus $E_{\tau}:=\mathbb{C}/(\mathbb{Z}+\mathbb{Z}\tau)$ has the same number of even solutions. This result is quite surprising from the PDE point of view.
\end{abstract}

\maketitle

\section{Introduction}

Throughout the paper, as in Part I \cite{CKL1}, we use the notations $\omega_{0}=0$, $\omega_{1}=1$,
$\omega_{2}=\tau$, $\omega_{3}=1+\tau$ and $\Lambda_{\tau}=\mathbb{Z+Z}\tau$,
where $\tau \in \mathbb{H}=\{  \tau|\operatorname{Im}\tau>0\}  $.
Define $E_{\tau}:=\mathbb{C}/\Lambda_{\tau}$ to be a flat torus and $E_{\tau}[2]:=\{ \frac{\omega_{k}}{2}|k=0,1,2,3\}+\Lambda
_{\tau}$ to be the set consisting of the lattice points and $2$-torsion points
in $E_{\tau}$. For $z\in\mathbb{C}$ we denote $[z]:=z \ (\text{mod}\ \Lambda_{\tau}) \in E_{\tau}$.
For a point $[z]$ in $E_{\tau}$ we often write $z$ instead of $[z]$ to
simplify notations when no confusion arises.

Let $\wp(z)=\wp(z|\tau)$ be the
Weierstrass elliptic function with periods $\Lambda_{\tau}$ and define $e_k(\tau):=\wp(\frac{\omega_k}{2}|\tau)$, $k=1,2,3$. Let $\zeta(z)=\zeta(z|\tau):=-\int^{z}\wp(\xi|\tau)d\xi$
be the Weierstrass zeta function with two quasi-periods $\eta_{k}(\tau)$, $%
k=1,2$:
\begin{equation}
\eta_{k}(\tau):=2\zeta(\tfrac{\omega_{k}}{2} |\tau)=\zeta(z+\omega_{k} |\tau)-\zeta(z|\tau),\quad k=1,2,
\label{40-2}
\end{equation}
and $\sigma(z)=\sigma(z|\tau)$ be the Weierstrass sigma function defined by $%
\sigma(z):=\exp \int^{z}\zeta(\xi)d\xi$. Notice that $\zeta(z)$ is an odd
meromorphic function with simple poles at $\Lambda_{\tau}$ and $\sigma(z)$
is an odd entire function with simple zeros at $\Lambda_{\tau}$.

This is the second in a series of papers to study the generalized Lam\'{e} equation (denoted by H$(\mathbf{n},B,\tau)$)
\begin{equation}  \label{eq21}
y^{\prime \prime }(z)=I_{\mathbf{n}}(z;B,\tau)y(z),\quad z\in\mathbb{C},
\end{equation}
with
\begin{equation}
I_{\mathbf{n}}(z;B,\tau):=\sum_{k=0}^3n_k(n_k+1)\wp(z+\tfrac{\omega_k}{2}|\tau)+B,
\end{equation}
where $\mathbf{n}=(n_0, n_1, n_2, n_3)$ with $n_k\in\mathbb{Z}_{\geq 0}$ and $\max n_k\geq 1$. By changing variable $z\to z+\frac{\omega_k}{2}$ if necessary, we always assume $n_0\geq 1$.

H$(\mathbf{n},B,\tau)$ is the elliptic form of the well-known Heun's equation and the potential $\sum_{k=0}^3n_k(n_k+1)\wp(z+\tfrac{\omega_k}{2}|\tau)$
is the so-called \emph{Treibich-Verdier potential }(\cite{TV}), which is known
as an algebro-geometric finite-gap potential associated with the stationary
KdV hierarchy \cite{GW1,TV}. See also a
series of papers \cite{Takemura1,Takemura2,Takemura3,Takemura4,Takemura5} by Takemura,
where H$(\mathbf{n},B,\tau)$ was studied as the eigenvalue problem for the
Hamiltonian of the $BC_{1}$ (one particle) Inozemtsev model.
When $\mathbf{n}=(n,0,0,0)$, the potential $n(n+1)\wp(z|\tau)$ is the well-known
Lam\'{e} potential and (\ref{eq21}) becomes the Lam\'{e} equation
\begin{equation}  \label{Lame}
y^{\prime \prime }(z)=[n(n+1)\wp(z|\tau)+B]y(z),\quad z\in\mathbb{C}.
\end{equation}
Ince \cite{Ince} first discovered that the Lam\'{e} potential is a
finite-gap potential. See also the classic texts \cite{Halphen,Poole,Whittaker-Watson} and recent works \cite{CLW,Dahmen,LW2,Maier} for more details about (\ref{Lame}).

In this paper, we continue our study, initiated in Part I \cite{CKL1}, on H$(\mathbf{n}, B, \tau)$ from the monodromy aspect. Since the local exponents of H$(\mathbf{n}, B, \tau)$ at $\frac{\omega_{k}}{2}$ are $-n_{k}$, $n_{k}+1$ and $I_{\mathbf{n}}(z; B, \tau)$ is even elliptic, it is easily seen (cf. \cite{GW1,Takemura1}) that any solution is meromorphic in $\mathbb{C}$, i.e. the local monodromy matrix
at $\frac{\omega_{k}}{2}$ is $I_{2}$. Thus the monodromy representation is a group homeomorphism $\rho:\pi_{1}(E_{\tau})  \to
SL(2,\mathbb{C})$, which is abelian and hence reducible. Let $\ell_{j}$, $j=1,2$, be two
fundamental cycles of $E_{\tau}$. Then there are two cases (see Part I \cite{CKL1}):

\begin{itemize}
\item[(a)] Completely reducible (i.e. all the monodromy matrices have two
linearly independent common eigenfunctions). Up to a common conjugation,
$\rho(\ell_{1})$ and $\rho(\ell_{2})$ can be expressed as%
\begin{equation}
\rho(\ell_{1})=%
\begin{pmatrix}
e^{-2\pi is} & 0\\
0 & e^{2\pi is}%
\end{pmatrix}
,\text{ \  \  \ }\rho(\ell_{2})=%
\begin{pmatrix}
e^{2\pi ir} & 0\\
0 & e^{-2\pi ir}%
\end{pmatrix}
\label{Mono-1}%
\end{equation}
for some $(r,s)\in \mathbb{C}^{2}\backslash \frac{1}{2}\mathbb{Z}^{2}$. In
particular,
\begin{equation}
(\text{tr}\rho(\ell_{1}),\text{tr}\rho(\ell_{2}))=(2\cos2\pi s,2\cos2\pi
r)\not \in \{ \pm(2,2),\pm(2,-2)\}. \label{complete-rs}%
\end{equation}

\item[(b)] Not completely reducible (i.e. the space of common eigenfunctions
is of dimension $1$). Up to a common conjugation, $\rho(\ell_{1})$ and
$\rho(\ell_{2})$ can be expressed as%
\begin{equation}
\rho(\ell_{1})=\varepsilon_{1}%
\begin{pmatrix}
1 & 0\\
1 & 1
\end{pmatrix}
,\text{ \  \  \ }\rho(\ell_{2})=\varepsilon_{2}%
\begin{pmatrix}
1 & 0\\
\mathcal{C} & 1
\end{pmatrix}
, \label{Mono-21}%
\end{equation}
where $\varepsilon_{1},\varepsilon_{2}\in \{ \pm1\}$ and $\mathcal{C}%
\in \mathbb{C}\cup \{ \infty \}$. In particular,
\begin{equation}
(\text{tr}\rho(\ell_{1}),\text{tr}\rho(\ell_{2}))=(2\varepsilon_{1}%
,2\varepsilon_{2})\in \{ \pm(2,2),\pm(2,-2)\}. \label{notcompleteC}%
\end{equation}
Remark that if $\mathcal{C}=\infty$, then (\ref{Mono-21}) should be understood
as%
\begin{equation}
\rho(\ell_{1})=\varepsilon_{1}%
\begin{pmatrix}
1 & 0\\
0 & 1
\end{pmatrix}
,\text{ \  \  \ }\rho(\ell_{2})=\varepsilon_{2}%
\begin{pmatrix}
1 & 0\\
1 & 1
\end{pmatrix}
. \label{Mono-31}%
\end{equation}

\end{itemize}

In view of Case (a), a natural question that interest us is \emph{how to characterize the monodromy data $(r,s)$}.
For the Lam\'{e} equation (\ref{Lame}), Wang and the
third author \cite{LW2} proved the existence of a \emph{pre-modular form} $Z^n_{r,s}(\tau)$
such that the monodromy matrices $\rho(\ell_j)$'s of (\ref{Lame}) at $\tau=\tau_0$ are given by \eqref{Mono-1} for some $B$ if and only if $Z^n_{r,s}(\tau_0)=0$. This $Z^n_{r,s}(\tau)$ is holomorphic in $\tau$ if $(r,s)\in\mathbb{R}
^2\setminus \frac{1}{2}\mathbb{Z}^2$. Moreover, $Z^n_{r,s}(\tau)$ is a
modular form of weight $\frac{1}{2}n(n+1)$ with respect to the principal congruence subgroup $$\Gamma(m):=\{\gamma
\in SL(2,\mathbb{Z})|\gamma\equiv I_2\mod m \}$$ if $(r,s)$ is a $m$-torsion point; see \cite{LW2}. Thus
$Z^n_{r,s}(\tau)$ is called a \emph{pre-modular form} in \cite{LW2}.

One main purpose of this paper is to extend the above
result in \cite{LW2} to include the Trebich-Verdier potential. Here is our first main result.

\begin{theorem}
\label{thm-premodular}  There exists a pre-modular form $Z_{r,s}^{\mathbf{n}%
}(\tau)$ defined in $\tau\in\mathbb{H}$ for any pair $(r,s)\in\mathbb{C}%
^2\setminus \frac{1}{2}\mathbb{Z}^2$ such that the followings hold.

\begin{enumerate}
\item[(a)] If $(r,s)=(\frac{k_1}{m},\frac{k_2}{m})$ with $m\in 2\mathbb{N}
_{\geq 2}$, $k_1,k_2\in\mathbb{Z}_{\geq 0}$ and $\gcd(k_1,k_2,m)=1$, then $
Z_{r,s}^{\mathbf{n}}(\tau)$ is a modular form of weight $
\sum_{k=0}^3n_k(n_k+1)/2$ with respect to  $
\Gamma(m)$.

\item[(b)] For $(r,s)\in\mathbb{C}^2\setminus\frac{1}{2}\mathbb{Z}^2$ and $\tau_0\in\mathbb{H}$ such that $r+s\tau_0\notin \Lambda_{\tau_0}$, $Z_{r,s}^{\mathbf{n}}(\tau_0)=0$ if and only
if there is $B\in\mathbb{C}$ such that H$(\mathbf{n}, B, \tau_0)$
has its monodromy matrices $\rho(\ell_1)$ and $\rho(\ell_2)$ given by \eqref{Mono-1}.
\end{enumerate}
\end{theorem}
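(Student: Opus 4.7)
The plan is to extend the strategy of \cite{LW2} from the Lam\'e case to the Treibich--Verdier case, building a common eigenfunction of the monodromy from a Hermite--Krichever type ansatz and then distilling the solvability conditions into one holomorphic function of $\tau$ that will serve as $Z_{r,s}^{\mathbf{n}}(\tau)$. Since any solution of H$(\mathbf{n},B,\tau)$ is meromorphic on $\mathbb{C}$ with poles possible only at $\frac{\omega_k}{2}+\Lambda_\tau$ of order at most $n_k$, the common eigenfunction $y$ for \eqref{Mono-1} in Case (a), satisfying $y(z+1)=e^{-2\pi is}y(z)$ and $y(z+\tau)=e^{2\pi ir}y(z)$, must take the form
\[
y(z)=e^{\kappa z}\,\frac{\prod_{j=1}^{N}\sigma(z-a_j|\tau)}{\prod_{k=0}^{3}\sigma(z-\tfrac{\omega_k}{2}|\tau)^{n_k}},\qquad N:=n_0+n_1+n_2+n_3,
\]
with unknowns $\kappa\in\mathbb{C}$ and $a_1,\ldots,a_N\in\mathbb{C}\setminus E_\tau[2]$. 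Imposing the two Floquet multipliers and invoking Legendre's relation $\eta_1\tau-\eta_2=2\pi i$ gives two affine identities that determine $\kappa$ and $\sum_j a_j\pmod{\Lambda_\tau}$ as explicit functions of $(r,s,\tau)$; the hypothesis $r+s\tau\notin\Lambda_\tau$ in (b) is what ensures the ansatz is nondegenerate and the two monodromy eigenfunctions are genuinely independent.

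Next, I would substitute the ansatz into \eqref{eq21} via the Riccati form $\phi'+\phi^2=I_{\mathbf{n}}$ with
\[
\phi(z)=y'(z)/y(z)=\kappa+\sum_{j=1}^N\zeta(z-a_j)-\sum_{k=0}^3 n_k\zeta(z-\tfrac{\omega_k}{2}),
\]
and match Laurent expansions at each $\frac{\omega_k}{2}$ (which produces $\tfrac{1}{2}\sum_k n_k(n_k+1)$ polynomial indicial constraints relating the $a_j$'s) and at each $a_j$ (simple pole of residue $1$, producing $N$ Bethe-type equations and a formula for $B$). Combined with the monodromy identities, this is a $0$-dimensional algebraic system in $(a_1,\ldots,a_N,B)$ for each fixed $(r,s,\tau)$. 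Assertion (b) is the statement that this system admits a solution, so after eliminating the $a_j$'s and $B$ one obtains a single identity involving only $(r,s,\tau)$. I would \emph{define} $Z_{r,s}^{\mathbf{n}}(\tau)$ to be this identity (cleared of denominators and suitably normalized). Holomorphicity in $\tau\in\mathbb{H}$ is then automatic because $\wp,\zeta,\sigma,\eta_j$ are all holomorphic and the elimination is algebraic.

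For part (a), I would exploit the covariance of the ansatz under $\gamma\in SL(2,\mathbb{Z})$. Under such $\gamma$, the tuple $(\wp,\zeta,\sigma,\eta_j,e_k)$ transforms with known modular weights, the half-periods $\frac{\omega_k}{2}$ are permuted modulo $\Lambda_\tau$, and the monodromy data transforms as $(r,s)\mapsto(r,s)\gamma$. For $\gamma\in\Gamma(m)$ the permutation on $E_\tau[2]$ is trivial (so $\mathbf{n}$ is preserved) and, when $(r,s)$ is an $m$-torsion point, $(r,s)\gamma\equiv(r,s)\pmod{\mathbb{Z}^2}$. Since $Z_{r,s}^{\mathbf{n}}(\tau)$ depends on $(r,s)$ only modulo $\mathbb{Z}^2$ by the construction of the ansatz, this yields $\Gamma(m)$-invariance up to an automorphy factor whose weight is the sum of the weights of the building blocks contributing to the eliminant. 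A bookkeeping of these weights matches the claimed total $\tfrac{1}{2}\sum_{k=0}^3 n_k(n_k+1)$, consistent with the Lam\'e specialization of \cite{LW2}.

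The main technical obstacle is the elimination procedure in the second paragraph: a direct resultant computation is impractical for large $\sum n_k$. My plan is to carry this out inductively on $\sum n_k$ by means of the isomonodromic (Crum--Darboux) transformations developed in Part I \cite{CKL1}, which relate H$(\mathbf{n},B,\tau)$ to H$(\mathbf{n}',B',\tau)$ with $\sum n_k'<\sum n_k$ and ultimately reduce to the Lam\'e case treated in \cite{LW2}. Each reduction pulls the pre-modular form of the smaller problem back to the larger one; verifying that this pullback is compatible with the algebraic relations above, that the weight increases by exactly $\tfrac{1}{2}[n_k(n_k+1)-n_k'(n_k'+1)]$ at each step, and that the exceptional loci $r+s\tau\in\Lambda_\tau$ behave well under the induction, is where the real work will lie.
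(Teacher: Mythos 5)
Your setup — the Hermite--Krichever ansatz $y_{\boldsymbol{a}}$, the Floquet conditions giving \eqref{rs2}, and the Bethe-type equations from matching Laurent expansions at $\frac{\omega_k}{2}$ and $a_j$ — is exactly the starting point of the paper (it is the content of Theorem \ref{thm4.1--1} and Theorem \ref{thm4-111}). But the heart of the proof is the elimination step, and there your proposal has a genuine gap. You propose to "define $Z_{r,s}^{\mathbf{n}}$ to be this identity (cleared of denominators and suitably normalized)," but a generic resultant of the system only gives a \emph{necessary} condition: it can carry extraneous factors, vanish with multiplicities, or vanish at degenerate limits of the fibers, and then the "only if" direction of (b) (that $Z_{r,s}^{\mathbf{n}}(\tau_0)=0$ forces the existence of a genuine $\boldsymbol{a}$ and $B$ with the prescribed monodromy) fails. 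The paper avoids this by showing that the single function $\mathbf{z}_{\mathbf{n}}(\boldsymbol{a})=\zeta(\sum a_i-\sum\frac{n_k\omega_k}{2})-\sum\zeta(a_i)+\sum\frac{n_k\eta_k}{2}$ is a \emph{primitive generator} of $K(\overline{X_{\mathbf{n}}(\tau)})$ over $K(E_\tau)$, so that $Z_{r,s}^{\mathbf{n}}$ is its \emph{minimal polynomial} evaluated at $\sigma=r+s\tau$, $\mathbf{z}=Z_{r,s}$. This requires two inputs absent from your plan: (i) Lemma \ref{lem2}, the separation statement that $\sigma_{\mathbf{n}}(\boldsymbol{a})=\sigma_{\mathbf{n}}(\boldsymbol{b})$ and $\mathbf{z}_{\mathbf{n}}(\boldsymbol{a})=\mathbf{z}_{\mathbf{n}}(\boldsymbol{b})$ force $\boldsymbol{a}=\boldsymbol{b}$, proved via the fourth-order ODE \eqref{forth-order} satisfied by products of solutions of two Heun equations with the same monodromy and a local-exponent argument killing the odd elliptic solution; and (ii) Lemma \ref{lemma-pole}, that $\mathbf{z}_{\mathbf{n}}$ has poles only over $[0]$, which makes $W_{\mathbf{n}}$ monic over the affine Weierstrass ring and hence makes every root of $W_{\mathbf{n}}(\mathbf{z})$ at $\sigma=r+s\tau_0\neq[0]$ come from an actual point of $Y_{\mathbf{n}}(\tau_0)$.

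Relatedly, you dismiss the weight count as "bookkeeping," but the $\mathbf{z}$-degree of the eliminant equals $\deg\sigma_{\mathbf{n}}=\frac{1}{2}\sum_{k=0}^3 n_k(n_k+1)$ precisely because of Theorem \ref{thm-degree} (the degree of the addition map, the main theorem of Part I) combined with the separation lemma; this is a substantive input, not an automatic consequence of the ansatz. Finally, your proposed induction via isomonodromic Darboux transformations is not what the paper does and, as you concede, is where "the real work will lie" — note that such transformations only exist between special pairs $\mathbf{n},\mathbf{n}'$ (cf.\ Theorem \ref{thm-npi}), so they cannot reduce a general $\mathbf{n}$ to the Lam\'e case. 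As written, the proposal identifies the right objects but defers or omits the arguments that make (b) an equivalence and pin down the weight.
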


To explain our construction of the pre-modular form, we have to briefly recall the hyperelliptic curve associated with H$(\mathbf{n}, B, \tau)$. It is classical that the product of two solutions of H$(\mathbf{n}, B, \tau)$ solves the second symmetric product equation of H$(\mathbf{n}, B, \tau)$:
\begin{equation}  \label{eq2--3}
\Phi^{\prime \prime \prime }(z;B)-4I_{\mathbf{n}}(z; B, \tau)\Phi^{\prime
}(z;B)-2I_{\mathbf{n}}^{\prime }(z; B, \tau)\Phi(z;B)=0.
\end{equation}
It is known (see e.g. \cite{Takemura1}) that (\ref{eq2--3}) has a unique even elliptic solution (still denoted by $\Phi(z;B)$).
Multiplying $\Phi$ and integrating \eqref{eq2--3}, we see that
\[
Q_{\mathbf{n}}(B;\tau):=\Phi{^{\prime }}(z;B)^2-2\Phi(z;B)\Phi^{\prime \prime }(z;B)+4I_{\mathbf{n}}(z; B, \tau)\Phi(z;B)^2
\]
is independent of $z$. The fact that the Treibich-Verdier potential is an
algebro-geometric solution of the KdV hierarchy follows from that $Q_{\mathbf{n}}(B;\tau)$ is a monic polynomial of $B$ up to a multiplicity of $\Phi(z;B)$; see \cite{GW}.
In this case, $Q_{\mathbf{n}}(B; \tau)$ is known as the \emph{spectral polynomial}
and $\Gamma_{\mathbf{n}}(\tau):=\{(B,W)|W^2=Q_{\mathbf{n}}(B;\tau)\}$ is called the
\emph{spectral curve} of the Treibich-Verdier potential.

In Part I \cite{CKL1}, we proved that the spectral curve $\Gamma_{\mathbf{n}%
}(\tau)$ can be embedded into Sym$^N E_{\tau}:=E_{\tau}^N/S_N$, where $N:=\sum_{k=0}^3n_k$.
Since Sym$^NE_\tau$ has a natural addition map to $E_\tau:
\{a_1,\cdots,a_N\}\mapsto\sum_{i=1}^Na_i-\sum_{k=1}^{3}n_k\frac{\omega_k}{2}$, the composition gives arise to
a finite morphism $\sigma_{\mathbf{n}}(\cdot|\tau):\overline{\Gamma_{\mathbf{
n}}(\tau)}\rightarrow E_\tau$, still called the \emph{addition map}. See Section \ref{Ex-premodular} for a brief overview. The main result of Part I \cite{CKL1} is to determine the degree of $\sigma_{\mathbf{n}}$.

\begin{theorem}
\label{thm-degree} \cite{CKL1} The addition map $\sigma_{\mathbf{n}}: \overline{\Gamma_{\mathbf{n}}(\tau)} \to E_\tau$
has degree $\sum_{k=0}^3n_k(n_k+1)/2$.
\end{theorem}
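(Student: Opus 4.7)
The plan is to exploit the embedding $\overline{\Gamma_{\mathbf{n}}(\tau)} \hookrightarrow \text{Sym}^N E_\tau$ alluded to above, under which $\sigma_{\mathbf{n}}$ factors through the natural sum map $\text{Sym}^N E_\tau \to E_\tau$, and to count the fiber of $\sigma_{\mathbf{n}}$ over a generic target point via the Hermite-Halphen parametrization of Floquet solutions of H$(\mathbf{n},B,\tau)$.

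First I would make the embedding explicit by writing the even elliptic solution $\Phi(z;B)$ of the symmetric-product equation (\ref{eq2--3}) in sigma-function form. For generic $B$ its divisor decomposes as $D_{\text{fix}} + D_{\text{mov}}(B)$, where $D_{\text{fix}}$ is supported on $E_\tau[2]$ and determined by $\mathbf{n}$, and $D_{\text{mov}}(B)$ is a $\mathbb{Z}_2$-invariant moving divisor of degree $2N$ splitting as $\{\pm a_i(B)\}_{i=1}^N$. The embedding then sends $(B,W)\in\Gamma_{\mathbf{n}}(\tau)$ to $\{a_1(B),\ldots,a_N(B)\}\in\text{Sym}^N E_\tau$, with the sign of $W$ encoding the resolution of the $\pm$ ambiguity, and the hyperelliptic involution $(B,W)\mapsto(B,-W)$ becomes the negation $\{a_i\}\mapsto\{-a_i\}$. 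This yields
\[
\sigma_{\mathbf{n}}(B,-W)\;\equiv\;-\sigma_{\mathbf{n}}(B,W)-\sum_{k=1}^{3}n_k\omega_k\pmod{\Lambda_\tau},
\]
so $\sigma_{\mathbf{n}}$ is non-trivial on each hyperelliptic pair and therefore of finite positive degree.

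Next, I would compute this degree by fiber-counting over a generic $c\in E_\tau$. By the Hermite-Halphen ansatz, each preimage $(B,W)\in\sigma_{\mathbf{n}}^{-1}(c)$ corresponds to a Floquet eigenfunction
\[
y(z)\;=\;e^{\kappa z}\prod_{k=0}^{3}\sigma(z-\tfrac{\omega_k}{2}|\tau)^{-n_k}\prod_{i=1}^{N}\sigma(z-a_i|\tau)
\]
of H$(\mathbf{n},B,\tau)$ satisfying $\sum_i a_i\equiv c+\sum_{k=1}^{3}n_k\frac{\omega_k}{2}\pmod{\Lambda_\tau}$. Inserting this ansatz into the ODE and matching principal parts at each $\frac{\omega_k}{2}$ reduces the fiber count to an algebraic system in $(a_1,\ldots,a_N,\kappa,B)$ modulo the $S_N$ action on the $a_i$'s. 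Equivalently, this is the intersection number inside $\text{Sym}^N E_\tau$ of the image of $\overline{\Gamma_{\mathbf{n}}(\tau)}$ with a generic fiber $\mathbb{P}^{N-1}$ of the sum map.

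The main obstacle will be pinning the count down to exactly $\frac{1}{2}\sum_{k=0}^{3}n_k(n_k+1)$: each singular point $\frac{\omega_k}{2}$ contributes a quadratic-in-$n_k$ term, reflecting the local exponent pair $\{-n_k,n_k+1\}$ of H$(\mathbf{n},B,\tau)$, which requires delicate local bookkeeping near the fixed divisor $D_{\text{fix}}$. A robust route is a specialization argument: the degree is locally constant in $\tau\in\mathbb{H}$, so one may degenerate $\tau\to i\infty$ to a nodal rational curve, where $\Phi$ becomes a rational function and the Hermite-Halphen polynomials reduce to hypergeometric-type polynomials of explicit degree; combined with the Lam\'{e} baseline $\mathbf{n}=(n,0,0,0)$ of degree $\frac{1}{2}n(n+1)$ established by Wang and the third author \cite{LW2}, this pins down the exact formula.
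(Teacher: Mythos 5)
First, a point of orientation: the present paper does not prove Theorem \ref{thm-degree} at all --- it is quoted from Part I \cite{CKL1} and used here only as an input (via (\ref{degreef}) and (\ref{finite-ext})) to the construction of $W_{\mathbf{n}}$ and $Z_{r,s}^{\mathbf{n}}$, so there is no in-paper proof to compare against. Judged on its own, your proposal correctly reproduces the framework that Part I sets up and that Section \ref{Ex-premodular} recalls: the Hermite--Halphen ansatz $y_{\boldsymbol{a}}$ of (\ref{yby}), the embedding $i_{\mathbf{n}}$ of $\Gamma_{\mathbf{n}}(\tau)$ into $\mathrm{Sym}^N E_{\tau}$ via the zeros of the even product $\Phi_e$, the fact that the hyperelliptic involution becomes $\boldsymbol{a}\mapsto-\boldsymbol{a}$, and the identity $\sigma_{\mathbf{n}}(-\boldsymbol{a})=-\sigma_{\mathbf{n}}(\boldsymbol{a})$ (your extra shift $\sum_k n_k\omega_k$ lies in $\Lambda_{\tau}$, so this agrees with item (v)). This shows $\sigma_{\mathbf{n}}$ is a nonconstant, hence finite, morphism --- but only granted the irreducibility of $\overline{X_{\mathbf{n}}(\tau)}$, which is itself a nontrivial input from Part I that you use silently and without which ``the degree'' is not even a single well-defined number.

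The genuine gap is that the degree computation --- the entire content of the theorem --- is never carried out. You reduce the claim to an algebraic system in $(a_1,\dots,a_N,\kappa,B)$, equivalently to an intersection number with a fiber $\mathbb{P}^{N-1}$ of the Abel sum map, and then defer the quadratic-in-$n_k$ count to ``delicate local bookkeeping'' and an unexecuted degeneration $\tau\to i\infty$. That degeneration route has two unaddressed obstructions. (i) Local constancy of the degree presupposes that the curves $\overline{X_{\mathbf{n}}(\tau)}$ form a flat family of irreducible curves with the addition maps assembling into a family of finite morphisms; even then, $\tau\to i\infty$ leaves $\mathbb{H}$, so one must control the limits of the curve and of a generic fiber through the degeneration --- this is the hard analytic content, not bookkeeping. (ii) The four half-periods degenerate inequivalently as $q=e^{2\pi i\tau}\to 0$, so the Lam\'{e} baseline $\mathbf{n}=(n,0,0,0)$ (which is due to \cite{CLW}, not \cite{LW2}) does not ``pin down'' the general case: the absence of cross terms $n_jn_k$ in $\frac{1}{2}\sum_k n_k(n_k+1)$ is exactly what has to be verified and is invisible from the Lam\'{e} case. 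A concrete route that the machinery here makes available, and which your sketch stops short of, is the explicit characterization (\ref{set111}) of $X_{\mathbf{n}}(\tau)$ by the $N-1$ polynomial equations (\ref{I-050})--(\ref{I-051}) in $(\wp(a_i),\wp'(a_i))$ (Theorem \ref{thm4-111}, proved in Appendix A): together with the one fiber condition $\sigma_{\mathbf{n}}(\boldsymbol{a})=c$ this is a square system whose generic solution count could be extracted by a B\'{e}zout-type argument. As written, the proposal ends exactly where the proof would have to begin.
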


As we will see, Theorem \ref{thm-degree} determines the weight of the pre-modular form $Z_{r,s}^{\mathbf{n}}(\tau)$ in Theorem \ref{thm-premodular}.
After Theorem \ref{thm-degree}, the field $K(\overline{\Gamma_{\mathbf{n}}(\tau)})$ of rational functions on $\overline{\Gamma_{\mathbf{n}}(\tau)}$ is a finite extension over $K(E_{\tau})$ of degree $\sum_{k=0}^3n_k(n_k+1)/2$ via this addition map (or covering map) $\sigma_{\mathbf{n}}$. The second step is to find the primitive generator of this extension, for which we need to prove the uniqueness of H$(\mathbf{n}, B, \tau)$ with respect to the monodromy data in the completely reducible case; see Lemma \ref{lem2}.

Now as an application of Theorem \ref{thm-premodular}, we consider the following Liouville equation with \textit{four
singular sources}:%
\begin{equation}
\Delta u+e^{u}=8\pi \sum_{k=0}^{3}n_{k}\delta_{\frac{\omega_{k}}{2}}\quad\text{ on
}\; E_{\tau}, \label{mean}%
\end{equation}
where $\delta_{\omega_{k}/2}$ is the Dirac measure at $\frac{\omega_{k}}{2}$.
Not surprisingly, (\ref{mean}) is related to various research areas. Geometrically, a solution $u$ of (\ref{mean}) leads to a metric
$g=\frac{1}{2}e^{u}(dx^{2}+dy^{2})$ with constant Gaussian curvature $+1$
acquiring \textit{conic singularities at $\frac{\omega_{k}}{2}$'s}. It also
appears in statistical physics as the equation for the \textit{mean field
limit} of the Euler flow in Onsager's vortex model (cf. \cite{CLMP}), hence
also called a \textit{mean field equation}. Recently (\ref{mean}) was shown to be related to the
self-dual condensates of the Chern-Simons-Higgs model in superconductivity.
We refer the readers to \cite{EG,LY,NT1} and references therein for
recent developments of related subjects of (\ref{mean}).

The existence of solutions of (\ref{mean}) is very challenging from
the PDE point of view. In fact, \emph{the solvability of (\ref{mean})
essentially depends on the moduli $\tau$ in a sophisticated manner}. This
phenomenon was first discovered by Wang and the third author \cite{LW} when
they studied the case $n_{0}=1$ and $n_{1}=n_{2}=n_{3}=0$, i.e.
\begin{equation}
\Delta u+e^{u}=8\pi \delta_{0}\  \  \text{on}\ E_{\tau}. \label{eq1-1}%
\end{equation}
Among other things, they proved that
\begin{itemize}
\item if $\tau \in i\mathbb{R}_{>0}$, i.e. $E_{\tau}$
is a rectangular torus, then (\ref{eq1-1}) has \textit{no} solution;

\item
if $\tau=\frac{1}{2}+\frac{\sqrt{3}}{2}i$, i.e. $E_{\tau}$ is a rhombus
torus, then (\ref{eq1-1}) has solutions.
\end{itemize}
Recently, (\ref{eq1-1})
has been thoroughly investigated in \cite{CKLW,LW4}.

Therefore, a natural question is {\it how to give a precise characterization of those $\tau$'s such that (\ref{mean}) has solutions on such $E_{\tau}$}. Here we give an answer to this question in the even solution case.

\begin{theorem}\label{thm-mfe}
The mean field equation (\ref{mean}) has even solutions on $E_{\tau}$ if and only if there is $(r,s)\in\mathbb{R}^2\setminus\frac{1}{2}\mathbb{Z}^2$ such that $\tau$ is a zero of this pre-modular form $Z_{r,s}^{\mathbf{n}}(\cdot)$, i.e. $Z_{r,s}^{\mathbf{n}}(\tau)=0$.
\end{theorem}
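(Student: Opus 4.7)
The strategy is the classical \emph{developing map} reduction of (\ref{mean}) to the monodromy problem for H$(\mathbf{n},B,\tau)$, followed by an application of Theorem \ref{thm-premodular}(b). Given any solution $u$ of (\ref{mean}), one constructs a multi-valued meromorphic function $f$ on $E_\tau\setminus E_\tau[2]$ with
\begin{equation*}
e^{u(z)}=\frac{8|f'(z)|^2}{(1+|f(z)|^2)^2},\qquad \{f,z\}=-2I_{\mathbf{n}}(z;B,\tau)
\end{equation*}
for some accessory parameter $B\in\mathbb{C}$. Hence $f=y_1/y_2$ with $y_1,y_2$ two linearly independent solutions of H$(\mathbf{n},B,\tau)$, and the local conic behavior at $\frac{\omega_k}{2}$ matches the local exponents $-n_k, n_k+1$ of the ODE. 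Since the Liouville formula is invariant under the $PSU(2)$-action on $f$, single-valuedness of $u$ on $E_\tau$ is equivalent to the $SL(2,\mathbb{C})$-monodromy of H$(\mathbf{n},B,\tau)$ being simultaneously conjugate into $SU(2)$. As $\pi_1(E_\tau)\cong\mathbb{Z}^2$, the two monodromy matrices commute, and commuting elements of $SU(2)$ not both equal to $\pm I$ are simultaneously diagonalizable in an orthonormal basis. The non-completely-reducible case (b) is ruled out (its unipotent matrices are never conjugate into $SU(2)$), so we land in case (a) of (\ref{Mono-1}) with unit-modulus eigenvalues $e^{\pm 2\pi is},\,e^{\pm 2\pi ir}$, forcing $(r,s)\in\mathbb{R}^2\setminus\tfrac{1}{2}\mathbb{Z}^2$.

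To handle evenness, I use that $I_{\mathbf{n}}(z;B,\tau)$ is even, so the involution $z\mapsto -z$ preserves the solution space of H$(\mathbf{n},B,\tau)$ and swaps the two one-dimensional monodromy eigenspaces, yielding $y_1(-z)=c_1y_2(z)$ and $y_2(-z)=c_2y_1(z)$ for constants $c_1,c_2$ with $c_1c_2=1$ (from the evenness of $\Phi(z;B)=y_1(z)y_2(z)$). Then $f(-z)=(c_1/c_2)/f(z)=c_1^2/f(z)$, and a direct computation shows the M\"obius transformation $w\mapsto c_1^2/w$ lies in $PSU(2)$ precisely when $|c_1|=1$, which is the condition for $u(-z)=u(z)$. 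Such a normalization can always be arranged by a real rescaling between $y_1$ and $y_2$, a change that preserves both the diagonal form and the $SU(2)$-structure of the monodromy. Therefore (\ref{mean}) admits an even solution on $E_\tau$ if and only if H$(\mathbf{n},B,\tau)$ has monodromy of the form (\ref{Mono-1}) with some $(r,s)\in\mathbb{R}^2\setminus\tfrac{1}{2}\mathbb{Z}^2$; by Theorem \ref{thm-premodular}(b) this is equivalent to $Z_{r,s}^{\mathbf{n}}(\tau)=0$. The converse runs backwards: given such $(r,s)$ with $Z_{r,s}^{\mathbf{n}}(\tau)=0$, Theorem \ref{thm-premodular}(b) supplies $B$ and the ODE, one fixes a diagonalizing basis with $y_1(-z)=y_2(z)$ (i.e.\ $c_1=c_2=1$), defines $f=y_1/y_2$ and $u=\log\frac{8|f'|^2}{(1+|f|^2)^2}$, and verifies that $u$ is even, single-valued on $E_\tau$, and satisfies (\ref{mean}) with the correct $8\pi n_k\delta_{\omega_k/2}$ sources via a local expansion at each $\frac{\omega_k}{2}$.

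The main obstacle is the careful bookkeeping of the two independent ambiguities at play: the developing map $f$ is a priori only defined modulo $PSU(2)$, whereas $(y_1,y_2)$ carries the larger scaling freedom $(\mathbb{C}^\times)^2$; the quotient $\mathbb{R}_{>0}$ corresponds to a one-parameter family of solutions $u$ of (\ref{mean}) coming from a fixed ODE, within which the even solution is singled out by the normalization $|c_1|=1$. Rigorously tracking these ambiguities---and in particular showing that an even solution always exists within the family whenever the ODE has $SU(2)$-monodromy---relies essentially on the uniqueness of H$(\mathbf{n},B,\tau)$ with prescribed monodromy data (developed in Section \ref{Ex-premodular} en route to Theorem \ref{thm-premodular}) and on the rigidity provided by $(r,s)\notin\tfrac{1}{2}\mathbb{Z}^2$, which ensures the two monodromy eigenspaces are genuinely distinct.
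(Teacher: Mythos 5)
Your argument is essentially the paper's own: reduce (\ref{mean}) via the developing map and the Schwarzian derivative to the statement that some H$(\mathbf{n},B,\tau)$ has monodromy conjugate into $SU(2)$, observe that for an abelian monodromy group this forces case (a) of (\ref{Mono-1}) with $(r,s)\in\mathbb{R}^2\setminus\frac{1}{2}\mathbb{Z}^2$, and then invoke Theorem \ref{thm-premodular}(b); the paper packages the intermediate equivalence as Theorem \ref{thm-3-1} and, in the converse direction, builds the even solution from the explicit eigenfunctions $y_{\pm\boldsymbol{a}}$ (so that $f(-z)=1/f(z)$ holds automatically), which is the same normalization you achieve abstractly via $|c_1|=1$. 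One imprecision worth fixing: you assert $\{f,z\}=-2I_{\mathbf{n}}(z;B,\tau)$ for \emph{any} solution $u$, but for a general solution $u_{zz}-\frac{1}{2}u_z^2$ is only an elliptic function with double poles at $E_\tau[2]$ whose residues need not vanish; it is precisely the evenness of $u$ that kills these residues and puts the Schwarzian into the accessory-parameter form $-2I_{\mathbf{n}}(z;B,\tau)$, so evenness must be invoked at this step rather than only later when analyzing the symmetry of $f$.
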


Theorem \ref{thm-mfe} generalizes the result in \cite{LW2} where the Lam\'{e} case $n_{1}=n_{2}=n_{3}=0$ was studied. In this case, Wang and the third author \cite{LW2} also proved that {\it once $\Delta u+e^{u}=8 n_0\pi\delta_{0}$ on $E_{\tau}$ has solutions, then it also has an even solution}. We believe that this statement should also hold for (\ref{mean}) with general $n_k\in\mathbb{Z}_{\geq 0}$, which seems challenging and remains open.

Now let us consider two special cases of (\ref{mean}):
\begin{equation}\label{mfe-16pi}\Delta u+e^u=16\pi\delta_{0} \  \  \text{on}\ E_{\tau}\end{equation}
and
\begin{equation}\label{mfe-24pi}\Delta u+e^u=8\pi\sum_{k=1}^3\delta_{\frac{\omega_k}{2}} \  \  \text{on}\ E_{\tau}.\end{equation}
There seems no obvious relations between these two equations. Therefore, the following result is quite surprising from the PDE aspect.
\begin{theorem}\label{16pi-24pi}
The mean field equations (\ref{mfe-16pi}) and (\ref{mfe-24pi})
 has the same number of even solutions.
\end{theorem}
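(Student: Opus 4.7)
The plan is to invoke Theorem~\ref{thm-mfe} for both mean field equations and then compare the zero sets of the associated pre-modular forms. For~(\ref{mfe-16pi}), the relevant configuration is $\mathbf{n}_1=(2,0,0,0)$, so $Z^{\mathbf{n}_1}_{r,s}(\tau)$ has weight $\tfrac{1}{2}n_0(n_0+1)=3$. For~(\ref{mfe-24pi}) the raw configuration is $(0,1,1,1)$; since Theorem~\ref{thm-premodular} assumes $n_0\geq 1$, I first translate $z\mapsto z+\omega_j/2$ for some $j\in\{1,2,3\}$ to arrive at $\mathbf{n}_2=(1,0,1,1)$ up to a permutation of the last three entries. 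The associated pre-modular form then also has weight $\tfrac{1}{2}\sum_k n_{2,k}(n_{2,k}+1)=3$, which is a necessary consistency check.

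By Theorem~\ref{thm-mfe}, the number of even solutions of each equation equals the number of classes $[(r,s)]\in (\mathbb{R}^2\setminus\tfrac{1}{2}\mathbb{Z}^2)/\!\sim$ at which the corresponding pre-modular form vanishes at $\tau$, where $(r,s)\sim\pm(r,s)+\mathbb{Z}^2$. Thus the theorem reduces to showing that these two cardinalities coincide for every $\tau\in\mathbb{H}$. The strategy is to construct an explicit bijection between the monodromy data of H$(\mathbf{n}_1,B_1,\tau)$ and H$(\mathbf{n}_2,B_2,\tau)$ under a suitable pairing $B_1\leftrightarrow B_2$; by Theorem~\ref{thm-premodular}(b), such a bijection automatically matches the two zero sets.

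A natural tool for constructing this correspondence is the classical isogeny identity
\[
\sum_{k=0}^{3}\wp(z+\tfrac{\omega_k}{2}|\tau)=4\wp(2z|\tau)+C(\tau),
\]
valid for an explicit $\tau$-dependent constant $C(\tau)$ (obtained by comparing principal parts at $z=0$ on both sides, both being $\tfrac{1}{2}\Lambda_\tau$-elliptic). This recasts the potential $V_2(z)=2\sum_{k=1}^{3}\wp(z+\omega_k/2)$ of H$(\mathbf{n}_2,B_2,\tau)$ as $-2\wp(z)+8\wp(2z)+(2C+B_2)$. Combined with the $2$-isogeny $z\mapsto 2z$ or a Darboux-type intertwiner that converts $n_0=2$ at the origin into the three $n_k=1$'s at the half-periods, this produces a candidate map between solutions, and hence monodromies, of the two generalized Lam\'e equations.

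The main obstacle will be the careful verification that this transformation descends to a genuine bijection on the $\pm 1$-equivalence classes of real monodromy pairs $(r,s)$: in particular, that real pairs are sent to real pairs, that the $\pm 1$-equivalence is preserved, and that multiplicities agree. Since both pre-modular forms have the same weight $3$ (equal to the common degree of $\sigma_{\mathbf{n}_i}$ by Theorem~\ref{thm-degree}), I expect the bijection to match all zeros without overcounting. Once this is settled, Theorem~\ref{thm-mfe} combined with the bijection immediately yields the equality of even-solution counts for~(\ref{mfe-16pi}) and~(\ref{mfe-24pi}).
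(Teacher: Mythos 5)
The overall reduction is sound and matches the paper's: by Theorem~\ref{thm-3-1} (equivalently, via Theorem~\ref{thm-mfe} together with the uniqueness statement of Lemma~\ref{lem2}), the number of even solutions of each equation equals the number of $B$'s for which the corresponding generalized Lam\'e equation has unitary monodromy, so it suffices to match the monodromy data of H$((2,0,0,0),B,\tau)$ with that of H$((0,1,1,1),B',\tau)$. But this matching is precisely the entire content of the theorem, and your proposal does not establish it: you name it as ``the main obstacle,'' propose a candidate mechanism, and then write that you ``expect the bijection to match all zeros without overcounting.'' That is a plan, not a proof. The paper closes exactly this gap by citing Takemura's result (\cite{Takemura5}, Section~4, proved via generalized Darboux transformations) that H$((n,0,0,0),B,\tau)$ and H$((n_0,n_1,n_2,n_3),B,\tau)$ are isomonodromic \emph{for the same $B$ and $\tau$}, which makes the count of unitary $B$'s identical with no further argument about realness of $(r,s)$, equivalence classes, or multiplicities.

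Moreover, the specific mechanism you sketch is unlikely to work as stated. The identity $\sum_{k=0}^{3}\wp(z+\tfrac{\omega_k}{2}|\tau)=4\wp(2z|\tau)$ (the constant is in fact $0$) rewrites the potential $2\sum_{k=1}^{3}\wp(z+\tfrac{\omega_k}{2})$ as $8\wp(2z)-2\wp(z)$, but the substitution $w=2z$ rescales the second derivative by $4$ and replaces the lattice $\Lambda_\tau$ by $\tfrac12\Lambda_\tau$; after normalizing one is left with a potential still involving $\wp$ at two different scales, not the Lam\'e potential $6\wp$, and the fundamental cycles (hence the pairs $(r,s)$) get permuted by the isogeny. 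So the $2$-isogeny plus the $\wp$-identity does not directly produce the required intertwiner; the actual construction in Takemura's work is a Darboux-type transformation and is genuinely nontrivial. A secondary, fixable point: counting even solutions by classes of $(r,s)$ rather than by $B$ requires invoking Lemma~\ref{lem2} to know that distinct unitary $B$'s give distinct data $(r,s)$ modulo $\pm1$ and $\mathbb{Z}^2$; the paper's formulation of Theorem~\ref{thm-3-1} in terms of $B$'s avoids this issue.
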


The rest of the paper is organized as follows. In Section \ref{Ex-premodular}, we recall the theory concerning H$(\mathbf{n},B,\tau)$ from Part I \cite{CKL1} and then give the proof of Theorem \ref{thm-premodular}. In Section \ref{app-mfe}, we apply Theorem \ref{thm-premodular} to prove Theorem \ref{thm-mfe}. We will also prove a general result which contains Theorem \ref{16pi-24pi} as a special case. Appendix A is devoted to the proof of Theorem \ref{thm4-111} which is needed in the construction of the pre-modular form.

\section{Existence of pre-modular forms}

\label{Ex-premodular}

The purpose of this section is to construct the pre-modular form and prove Theorem \ref{thm-premodular}.
First we recall some basic theory concerning H$(\mathbf{n},B,\tau)$ from Part I \cite{CKL1}. As mentioned before, by changing variable $z\to z+\frac{\omega_k}{2}$ if necessary, we always assume $n_0\geq 1$.

(i) Any solution of $H({\bf n},B,\tau)$ is meromorphic in $\mathbb{C}$. The corresponding second symmetric product equation
\[\Phi^{\prime \prime \prime }(z;B)-4I_{\mathbf{n}}(z;B,\tau)\Phi^{\prime
}(z;B)-2I_{\mathbf{n}}^{\prime }(z;B,\tau)\Phi(z;B)=0\]
has a {\it unique} even elliptic solution $\Phi_{e}(z;B)$ expressed by
\begin{equation}
\Phi_{e}(z;B)=C_{0}(B)  +\sum_{k=0}^{3}\sum
_{j=0}^{n_{k}-1}b_{j}^{(k)}(B)\wp(z+\tfrac{\omega_{k}}{2})^{n_{k}-j}
\label{3rd1}%
\end{equation}
where $C_{0}(B), b_{j}^{(k)}(B)$ are all polynomials in $B$ with $\deg C_{0}>\max_{j,k} \deg b_{j}^{(k)}$ and the leading coefficient of $C_{0}(B)$
being $\frac{1}{2}$. Moreover, $\Phi_{e}(z;B)=y_1(z;B)$ $y_1(-z;B)$, where $y_1(z;B)$ is a common eigenfunction of the monodromy matrices of $H({\bf n},B,\tau)$ and, up to a constant, can be written as
\begin{equation}\label{yby}y_1(z;B)=y_{\boldsymbol{a}}(z)
:=e^{c({\boldsymbol{a}})z}\frac{\prod_{i=1}^{N}\sigma(z-a_i)}{\prod_{k=0}^3\sigma(z-\frac{\omega_k}{2})^{n_k}}\end{equation}
with some $\boldsymbol{a}=(a_1,\cdots, a_N)$ and $c(\boldsymbol{a})\in\mathbb{C}$. See (\ref{61-381}) for the expression of $c(\boldsymbol{a})$ in the completely reducible case.
From (\ref{yby}) and the transformation law (let $\eta_3=2\zeta(\frac{\omega_3}{2})=\eta_1+\eta_2$)
\[\sigma(z+\omega_k)=-e^{\eta_k(z+\frac{\omega_k}{2})}\sigma(z),\quad k=1,2,3,\] it is easy to see that
$y_1(-z;B)=y_{-\boldsymbol{a}}(z)$ up to a sign $(-1)^{n_1+n_2+n_3}$.

(ii) Let $W$ be the Wroskian of $y_1(z;B)$ and $y_1(-z;B)$, then $W^2=Q_{\bf n}(B;\tau)$, where
$$Q_{\bf n}(B;\tau):=\Phi_{e}'(z;B)^2-2\Phi_{e}(z;B)\Phi_{e}^{\prime \prime }(z;B)+4I_{\mathbf{n}%
}(z;B,\tau)\Phi_{e}(z;B)^2$$
is a monic polynomial in $B$ with \emph{odd degree} and independent of $z$. Define the hyperelliptic curve $\Gamma_{\bf n}(\tau)$ by
\[\Gamma_{\bf n}(\tau):=\{(B, W)\,|\,W^2=Q_{\bf n}(B;\tau)\}.\]
Then the map $i_{\bf n}: \Gamma_{\bf n}(\tau)\to \text{Sym}^{N} E_{\tau}:=E_{\tau}^N/S_N$ defined by
\[i_{\bf n}(B, W):=\{[a_1],\cdots, [a_{N}]\}\]
is an embedding, where $\{[a_1],\cdots, [a_{N}]\}$ is uniquely determined by $y_1(z;B)$ via (\ref{yby}). Since $-W$ be the Wroskian of $y_1(-z;B)=y_{-{\boldsymbol{a}}}(z)$ and $y_1(z;B)$, we have
\[i_{\bf n}(B, -W)=\{-[a_1],\cdots, -[a_{N}]\}.\]

(iii) The monodromy of H$(\mathbf{n}, B, \tau)$ is completely reducible if and only if $y_1(z;B)=y_{\boldsymbol{a}}(z)$ and $y_1(-z;B)=y_{-\boldsymbol{a}}(z)$ are linearly independent, which is also equivalent to
\begin{equation}\label{a-0a}\{[a_1],\cdots,[a_N]\}\cap \{-[a_1],\cdots,-[a_N]\}=\emptyset.
\end{equation}
In this case, since $a_j\neq 0$ in $E_{\tau}$ for all $j$ and $n_0\not =0$, we have
\begin{equation}
c(\pm\boldsymbol{a})=\sum_{i=1}^{N}\zeta(\pm a_{i})-\sum_{k=1}^{3}\frac{n_{k}\eta
_{k}}{2}, \label{61-381}%
\end{equation}
which follows by inserting (\ref{yby}) into H$(\mathbf{n},B,\tau)$ and computing the leading terms at the singularity $0$; see Theorem \ref{thm4.1--1}.
Besides, the $(r,s)$ defined by
\begin{equation}
\left \{
\begin{array}
[c]{l}%
\sum_{i=1}^{N}a_{i}-\sum_{k=1}^{3}\frac{n_{k}\omega_{k}}{2}=r+s\tau \\
\sum_{i=1}^{N}\zeta(a_{i})  -\sum_{k=1}^{3}\frac{n_{k}\eta_{k}
}{2}=r\eta_{1}+s\eta_{2}
\end{array}
\right. \label{rs2}%
\end{equation}
satisfies $(r,s)\notin \frac{1}{2}\mathbb{Z}^2$. Furthermore, with respect to $y_{\boldsymbol{a}}(z)$ and $y_{-\boldsymbol{a}}(z)$, the monodromy matrices are given by
\begin{equation}
\rho(\ell_{1})=%
\begin{pmatrix}
e^{-2\pi is} & 0\\
0 & e^{2\pi is}%
\end{pmatrix}
,\text{ }\rho(\ell_{2})=%
\begin{pmatrix}
e^{2\pi ir} & 0\\
0 & e^{-2\pi ir}%
\end{pmatrix}.
\label{61.3512}%
\end{equation}

(iv) Let $Y_{{\bf n}}(\tau)$ be the image of $\Gamma_{{\bf n}}(\tau)$ in Sym$^{N}E_\tau$
under $i_{{\bf n}}$, i.e.
\begin{equation}
Y_{\mathbf{n}}(\tau)  =\left \{
\begin{array}
[c]{r}%
[\boldsymbol{a}]=\{[a_{1}],\cdot \cdot \cdot,[a_{N}]\}\text{ }\in $Sym$^NE_{\tau}|\text{
}y_{\boldsymbol{a}}(z)  \text{ defined in }\\
\text{(\ref{yby}) is a solution of H}(\mathbf{n},B,\tau) \text{ for some } B
\end{array}
\right \}, \label{set}%
\end{equation}
and $X_{{\bf n}}(\tau)$ be the image of $\{(B,W)\in\Gamma_{{\bf n}}|W\neq
0\}$ under $i_{{\bf n}}$, i.e.
\begin{equation}
X_{\mathbf{n}}(\tau)  = \{[\boldsymbol{a}]\in Y_{\mathbf{n}}(\tau)| \text{ (\ref{a-0a}) holds}\}. \label{set11}%
\end{equation}
Clearly $Y_{\mathbf{n}}(\tau)\setminus X_{\mathbf{n}}(\tau)$ consists of those finite branch points, i.e. those $\boldsymbol{a}$'s such that $y_{\boldsymbol{a}}(z)$ and $y_{-\boldsymbol{a}}(z)$ are linearly dependent, which is equivalent to
\begin{equation}\label{bran}\{[a_1],\cdots,[a_N]\}=\{-[a_1],\cdots,-[a_N]\}.\end{equation}
The number of finite branch points is at most $\deg Q_{\mathbf{n}}(B)$. Besides,
it was proved in Part I \cite{CKL1} that $\overline{X_{\mathbf{n}}(  \tau )  }=\overline
{Y_{\mathbf{n}}(  \tau )  }=Y_{\mathbf{n}}(\tau)
\cup\{\infty_0\}  $, where
\begin{equation}\label{infty0}
\infty_0:=\bigg(  \overset{n_{0}}{\overbrace{0,\cdot \cdot \cdot,0}}%
,\overset{n_{1}}{\overbrace{\tfrac{\omega_{1}}{2},\cdot \cdot \cdot,\tfrac
{\omega_{1}}{2}}},\overset{n_{2}}{\overbrace{\tfrac{\omega_{2}}{2},\cdot
\cdot \cdot,\tfrac{\omega_{2}}{2}}},\overset{n_{3}}{\overbrace{\tfrac{\omega_{3}%
}{2},\cdot \cdot \cdot,\tfrac{\omega_{3}}{2}}}\bigg)  .
\end{equation}

(v)
The first formula of (\ref{rs2}) motivates us to study the addition map $\sigma
_{\mathbf{n}}:\overline{X_{\mathbf{n}}(  \tau )  }\rightarrow E_{\tau}$ (also called a covering map in \cite[Section 4]{Takemura4}):
\[
\sigma_{\mathbf{n}}( [\boldsymbol{a}]) : =\sum_{i=1}^{N}[
a_{i}]  -\sum_{k=1}^{3}[\tfrac{n_{k}\omega_{k}}{2}].%
\]
Since $2\sum_{k=1}^{3}[\tfrac{n_{k}\omega_{k}}{2}]=[0]$, we have
\[
\sigma_{\mathbf{n}}( [-\boldsymbol{a}])  =-\sum_{i=1}^{N}[
a_{i}]  -\sum_{k=1}^{3}[\tfrac{n_{k}\omega_{k}}{2}]=-\sigma_{\mathbf{n}}( [\boldsymbol{a}]).%
\]
Since the algebraic curve $\overline{X_{\mathbf{n}}(\tau)}$ is irreducible, $\sigma_{\mathbf{n}}$ is a finite morphism and $\deg\sigma_{\mathbf{n}}$ is well-defined. Theorem \ref{thm-degree} says that
\begin{equation}\label{degreef}
\deg\sigma_{\mathbf{n}}=\frac{1}{2}\sum_{k=0}^3 n_k(n_k+1).
\end{equation}

The above theories can be found in Part I \cite{CKL1}. Here we also need the following result, which will give a precise characterization of $X_{\mathbf{n}}(\tau)$.
\begin{theorem}
\label{thm4-111}Suppose $\boldsymbol{a}=\{a_{1},\cdot \cdot \cdot
,a_{N}\}$ satisfies%
\begin{equation}
[  a_{i}]  \not \in E_{\tau}[2],\text{ }[a_{i}]
\not =\pm [  a_{j}]  ,\text{ }\forall i\not =j. \label{I-062}%
\end{equation}
Then $y_{\boldsymbol{a}}(z)$ is a solution of H$(\mathbf{n}, B, \tau)$ for some $B$
if and only if $\boldsymbol{a}$ satisfies\begin{equation}
\sum_{i=1}^{N}\wp^{\prime}(a_{i})\wp(a_{i})^{l}=0\text{ for }0\leq l\leq
n_{0}-2, \label{I-050}%
\end{equation}%
\begin{equation}
\sum_{i=1}^{N}\wp^{\prime}(a_{i})\prod_{j=1,\neq i}^{N}(\wp(a_{j})-e_{k})^{l}=0\text{
for }1\leq l\leq n_{k},\text{ }k\in \{1,2,3\}. \label{I-051}%
\end{equation}
\end{theorem}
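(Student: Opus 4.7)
The plan is to proceed via the logarithmic derivative $g(z) := y'_{\boldsymbol{a}}(z)/y_{\boldsymbol{a}}(z)$, which by \eqref{yby} equals
\[g(z) = c(\boldsymbol{a}) + \sum_{i=1}^{N} \zeta(z-a_i) - \sum_{k=0}^{3} n_k\,\zeta\!\left(z - \tfrac{\omega_k}{2}\right).\]
Using $\zeta(z+\omega_k) = \zeta(z)+\eta_k$ and $\sum_k n_k = N$, the function $g$ is $\Lambda_\tau$-periodic and hence elliptic. Since $y''_{\boldsymbol{a}} = I_{\mathbf{n}}(z;B,\tau)\,y_{\boldsymbol{a}}$ is equivalent to $g^2 + g' = I_{\mathbf{n}}(z;B,\tau)$, the assertion reduces to characterizing when the elliptic function
\[E(z) := g(z)^{2} + g'(z) - \sum_{k=0}^{3} n_k(n_k+1)\,\wp\!\left(z+\tfrac{\omega_k}{2}\right)\]
is constant (that constant then being the required $B$); equivalently, when $E$ has no poles on $E_\tau$.

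Under assumption \eqref{I-062} the singularities of $g$ are distinct simple poles, and a direct Laurent expansion shows $E$ has at worst simple poles with residues $2\bigl[g(z)-(z-a_i)^{-1}\bigr]\big|_{z=a_i}$ at each $a_i$ and $-2n_k\bigl[g(z)+n_k(z-\tfrac{\omega_k}{2})^{-1}\bigr]\big|_{z=\omega_k/2}$ at each $\omega_k/2$ with $n_k>0$; the double-pole contributions of $g^2+g'$ cancel exactly with those of the $\wp$-sum. Thus $E$ is constant iff all such residues vanish. Since $n_0 \ge 1$, the vanishing of the residue at $\omega_0/2 = 0$ yields precisely formula \eqref{61-381} for $c(\boldsymbol{a})$, and after substituting this value the remaining residue conditions become functional equations on $\boldsymbol{a}$ alone (with one linear dependence imposed by $\sum \mathrm{Res}=0$).

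The main computational step, and the main obstacle, is translating these $\zeta$-identities into the symmetric $\wp,\wp'$-relations \eqref{I-050}--\eqref{I-051}. Using the classical addition formula
\[\zeta(u+v)-\zeta(u)-\zeta(v) = \tfrac{1}{2}\,\frac{\wp'(u)-\wp'(v)}{\wp(u)-\wp(v)}\]
together with $\zeta(\omega_k/2)=\eta_k/2$ and $\wp'(\omega_k/2)=0$, I can eliminate all $\zeta$-values. For instance, combining the residue conditions at $0$ and at $\omega_k/2$ ($k\ge1$, $n_k>0$) telescopes, after cancellation of the $\zeta(a_i)$ terms, to $\sum_i \wp'(a_i)/(\wp(a_i)-e_k)=0$; multiplying through by $\prod_j(\wp(a_j)-e_k)$ recovers the $l=1$ case of \eqref{I-051}. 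The cleanest route for higher $l$ and for \eqref{I-050} is to pass to $\Phi(z) := y_{\boldsymbol{a}}(z)\,y_{-\boldsymbol{a}}(z)$: by the identity $\sigma(z-a)\sigma(z+a) = -\sigma(z)^2\sigma(a)^2(\wp(z)-\wp(a))$, and an analogous simplification of $\sigma(z-\omega_k/2)^{2n_k}$ in the denominator, $\Phi$ reduces to the rational function
\[\Phi(z) \;=\; C\,\frac{\prod_{i=1}^{N}\bigl(\wp(z)-\wp(a_i)\bigr)}{\prod_{k=1}^{3}\bigl(\wp(z)-e_k\bigr)^{n_k}}\]
of $\wp(z)$. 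Since under \eqref{I-062} the pair $y_{\pm\boldsymbol{a}}$ are solutions of $H(\mathbf{n},B,\tau)$ iff their product $\Phi$ coincides with the unique even elliptic solution $\Phi_e(z;B)$ of \eqref{eq2--3}, substituting the rational form above into \eqref{eq2--3}, factoring out the overall $\wp'(z)$ and matching coefficients in the resulting rational identity in $\wp(z)$ yields \eqref{I-050}--\eqref{I-051} directly, with $B$ emerging as the leftover free parameter. The bulk of the work is the combinatorial bookkeeping of partial fractions at each pole $e_k$ to get the full family of equations indexed by $l$.
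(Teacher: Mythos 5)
Your first half is sound and is essentially the paper's Theorem \ref{thm4.1--1}: the residue analysis of $E=g^2+g'-\sum n_k(n_k+1)\wp(z+\frac{\omega_k}{2})$ correctly produces $c(\boldsymbol{a})$ as in \eqref{61-381} and a system of $\zeta$-identities (the paper's \eqref{I-63}--\eqref{I-64}), and your derivation of the $l=1$ case of \eqref{I-051} from the residue condition at $\omega_k/2$ via the addition formula is exactly right. The gap is in your "cleanest route for higher $l$": passing to the product $\Phi(z)=y_{\boldsymbol{a}}(z)y_{-\boldsymbol{a}}(z)=C\,\prod_i(\wp(z)-\wp(a_i))/\prod_k(\wp(z)-e_k)^{n_k}$ destroys precisely the information the theorem is about. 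This $\Phi$ depends only on the multiset $\{\wp(a_i)\}$, hence is invariant under flipping the sign of any individual $a_i$; consequently, substituting it into \eqref{eq2--3} and matching coefficients in $\wp(z)$ can only produce conditions that are invariant under $a_i\mapsto -a_i$ one index at a time. But \eqref{I-050}--\eqref{I-051} are \emph{not} invariant under such flips (each contains $\wp'(a_i)$ linearly: already $\sum_i\wp'(a_i)=0$ for $n_0\ge 2$ changes under $a_1\mapsto -a_1$), and neither is the property "$y_{\boldsymbol{a}}$ solves H$(\mathbf{n},B,\tau)$", since $y_{\boldsymbol{a}}$ has its zeros at the $a_i$ themselves. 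Equivalently, your asserted equivalence "$y_{\pm\boldsymbol{a}}$ are solutions iff $\Phi$ coincides with $\Phi_e(\cdot;B)$" fails in the "if" direction: $\Phi=\Phi_e$ only tells you that $y_{\boldsymbol{b}}$ solves the equation for \emph{some} sign-adjusted $\boldsymbol{b}$ with $\{\pm b_i\}=\{\pm a_i\}$, not for $\boldsymbol{a}$ itself. So this route can at best recover \eqref{I-050}--\eqref{I-051} "up to a choice of signs", which is strictly weaker than the statement.

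The device that makes the second half work (and is what the paper uses in Theorem \ref{prop11}) is the \emph{ratio} rather than the product: set $f(z)=y_{\boldsymbol{a}}(z)/y_{-\boldsymbol{a}}(z)$, whose logarithmic derivative
\begin{equation*}
\frac{f'}{f}(z)=\sum_{i=1}^{N}\frac{\wp'(a_i)}{\wp(z)-\wp(a_i)}
\end{equation*}
is an even elliptic function that retains the signs $\wp'(a_i)$. One first shows that the $\zeta$-identities from your residue analysis are equivalent to $\operatorname{ord}_{z=\omega_k/2}f'=2n_k$ for all $k$ (this is the nontrivial claim \eqref{kaxi}, proved by showing the numerator polynomial $\psi(x)=\sum_h\wp'(a_h)\prod_{j\ne h}(x-\wp(a_j))$ must be proportional to $\prod_k(x-e_k)^{n_k}$), and then reads off \eqref{I-050} and \eqref{I-051} as the vanishing of the initial Laurent coefficients of $f'/f$ in the local parameters $1/\wp(z)$ at $0$ and $1/(\wp(z-\frac{\omega_k}{2})-e_k)$ at $\omega_k/2$. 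If you want to keep your framework, replace the $\Phi$-step by this expansion of $f'/f$; without it, the higher-$l$ conditions and \eqref{I-050} are not reachable from your setup.
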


Theorem \ref{thm4-111} in the Lam\'{e} case $n_1=n_2=n_3=0$ was proved in \cite{CLW}.
The proof of Theorem \ref{thm4-111} for general $n_k$ is technical and long, and will be given in Appendix A. Note that if $[\boldsymbol{a}]\in X_{\mathbf{n}}(\tau)$, then (\ref{a-0a}) implies $[a_i]\notin E_{\tau}[2]$ and $[a_i]\neq -[a_j]$ for all $i,j$, so $a_i$ is a zero of $y_{\boldsymbol{a}}(z)$ which must be simple, i.e. $[a_i]\neq [a_j]$ for any $i\neq j$ and so (\ref{I-062}) holds. In conclusion,
\begin{equation}
X_{\mathbf{n}}(\tau)  = \{[\boldsymbol{a}]\in \text{Sym}^NE_{\tau}| \text{$\boldsymbol{a}$ satisfies (\ref{I-062})-(\ref{I-051})}\}. \label{set111}
\end{equation}

Now we proceed to construct a pre-modular form $Z_{r,s}^{\mathbf{n}}(\tau)$. Let $K(E_{\tau})$ and $K(\overline{X_{\mathbf{n}}(\tau)})$ be the field of rational functions on $E_{\tau}$ and $\overline{X_{\mathbf{n}}(\tau)}$, respectively. Then (\ref{degreef}) indicates that $K(\overline{X_{\mathbf{n}}(\tau)})$ is a finite extension over $K(E_{\tau})$ and
\begin{equation}\label{finite-ext}\left[K(\overline{X_{\mathbf{n}}(\tau)}): K(E_{\tau})\right]=\deg\sigma_{\mathbf{n}}=\frac{1}{2}\sum_{k=0}^3 n_k(n_k+1).\end{equation}
A basic question is \emph{how to find a primitive generator}?

Motivated by \cite{LW2} and (\ref{rs2}), we consider the function
\[\mathbf{z}_{\mathbf{n}}(a_1,\cdots,a_N):=\zeta\Bigg(\sum_{i=1}^N a_i-\sum_{k=1}^{3}\frac{n_{k}\omega_{k}}{2}\Bigg)-\sum_{i=1}^{N}\zeta(a_{i})  +\sum_{k=1}^{3}\frac{n_{k}\eta_{k}
}{2},\]
which is meromorphic and periodic in each $a_i$ and hence defines a rational function on $E_{\tau}^N$. By symmetry, it descends to a rational function on Sym$^N E_{\tau}$. We denote the restriction $\mathbf{z}_{\mathbf{n}}|_{\overline{X_{\mathbf{n}}(\tau)}}$ also by $\mathbf{z}_{\mathbf{n}}$, which is a rational function on $\overline{X_{\mathbf{n}}(\tau)}$.

\begin{lemma}\label{lemma-pole}
The poles of $\mathbf{z}_{\mathbf{n}}$ on $\overline{X_{\mathbf{n}}(\tau)}$ are precisely the fiber $\sigma_{\mathbf{n}}^{-1}([0])$.
\end{lemma}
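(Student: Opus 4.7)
The plan is to pull apart the two $\zeta$-terms in $\mathbf{z}_{\mathbf{n}}$, identify the divisors where each is singular on $\operatorname{Sym}^N E_{\tau}$, and then restrict to $\overline{X_{\mathbf{n}}(\tau)}$. Viewed on the ambient space $\operatorname{Sym}^N E_{\tau}$, the meromorphic function $\mathbf{z}_{\mathbf{n}}$ can acquire only simple poles, and only along two loci: the divisor $\Sigma := \sigma^{-1}([0])$, where $\sigma : \operatorname{Sym}^N E_{\tau} \to E_{\tau}$ is the addition (arising from the first $\zeta$-term), and the divisor $\Delta := \{[\boldsymbol{a}] : [a_i] = [0]\ \text{for some}\ i\}$ (arising from $-\sum_i \zeta(a_i)$).

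On the open stratum $X_{\mathbf{n}}(\tau)$, the description (\ref{set111}) combined with (\ref{I-062}) forces every $[a_i]$ to avoid $E_{\tau}[2]$, and in particular to avoid $[0]$. Hence $X_{\mathbf{n}}(\tau) \cap \Delta = \emptyset$, the term $-\sum_i \zeta(a_i)$ is regular on $X_{\mathbf{n}}(\tau)$, and the pole set of $\mathbf{z}_{\mathbf{n}}$ on $X_{\mathbf{n}}(\tau)$ coincides with $X_{\mathbf{n}}(\tau) \cap \Sigma = \sigma_{\mathbf{n}}^{-1}([0]) \cap X_{\mathbf{n}}(\tau)$, with all poles simple.

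Only the single boundary point $\infty_0 \in \overline{X_{\mathbf{n}}(\tau)} \setminus X_{\mathbf{n}}(\tau)$ remains. A direct computation from (\ref{infty0}) gives $\sigma_{\mathbf{n}}(\infty_0) = \sum_{k=1}^3 n_k \tfrac{\omega_k}{2} - \sum_{k=1}^3 n_k \tfrac{\omega_k}{2} = [0]$, so $\infty_0 \in \sigma_{\mathbf{n}}^{-1}([0])$. It still has to be verified that $\mathbf{z}_{\mathbf{n}}$ truly has a pole (not a removable singularity) at $\infty_0$: both $\zeta$-contributions blow up simultaneously there, making cancellation a priori possible. I would take a local parameter $t$ at $\infty_0$ on $\overline{X_{\mathbf{n}}(\tau)}$ (coming from the branch point $B = \infty$ of the spectral curve $\Gamma_{\mathbf{n}}(\tau)$, whose defining polynomial $Q_{\mathbf{n}}$ has odd degree), expand the $n_0$ coordinates tending to $0$ as $a_i(t) = \alpha_i t + O(t^2)$, expand both $\zeta$-sums via $\zeta(z) = 1/z + O(z^3)$, and compare their $t^{-1}$-coefficients; the coordinates tending to $\omega_k/2$ for $k \in \{1,2,3\}$ contribute only bounded terms.

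The main obstacle is this last non-cancellation check. The $\alpha_i$'s are constrained by the specialization of (\ref{I-050})-(\ref{I-051}) at $\infty_0$, and the task is to show the resulting leading Laurent coefficient of $\mathbf{z}_{\mathbf{n}}$ in $t$ is nonzero (or, if it degenerates, that a later Laurent coefficient is). I expect to argue this from the algebraic relations together with the degree formula $\deg \sigma_{\mathbf{n}} = \tfrac12 \sum_k n_k(n_k+1)$ and the standing assumption $n_0 \ge 1$; the trivial case where $\mathbf{z}_{\mathbf{n}}\equiv 0$ on $\overline{X_{\mathbf{n}}(\tau)}$ (which happens, e.g., in the Lam\'e $n=1$ situation where $N=1$ and the two copies of $\zeta(a_1)$ cancel outright) is implicitly excluded by treating $\mathbf{z}_{\mathbf{n}}$ as a nontrivial rational function.
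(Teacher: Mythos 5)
Your analysis on the open stratum $X_{\mathbf{n}}(\tau)$ is correct, and so is the observation that $\infty_0\in\sigma_{\mathbf{n}}^{-1}([0])$. The proposal breaks down, however, at the claim that $\overline{X_{\mathbf{n}}(\tau)}\setminus X_{\mathbf{n}}(\tau)=\{\infty_0\}$. In the paper's setup $\overline{X_{\mathbf{n}}(\tau)}=Y_{\mathbf{n}}(\tau)\cup\{\infty_0\}$, and $Y_{\mathbf{n}}(\tau)\setminus X_{\mathbf{n}}(\tau)$ consists of the finitely many (at most $\deg Q_{\mathbf{n}}$) finite branch points, i.e.\ those $\boldsymbol{a}$ with $\{[a_1],\dots,[a_N]\}=\{-[a_1],\dots,-[a_N]\}$. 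At such points condition (\ref{I-062}) is not available, and some coordinates may lie in $E_{\tau}[2]$ --- in particular $[a_i]=[0]$ can occur (the paper notes this explicitly). A branch point only satisfies $\sigma_{\mathbf{n}}(\boldsymbol{a})\in E_{\tau}[2]$, not necessarily $\sigma_{\mathbf{n}}(\boldsymbol{a})=[0]$; so nothing in your argument excludes a branch point with, say, $\sigma_{\mathbf{n}}(\boldsymbol{a})=[\tfrac{\omega_1}{2}]$ at which your divisor $\Delta$ meets the curve and $-\sum_i\zeta(a_i)$ is a priori singular. That would be a pole of $\mathbf{z}_{\mathbf{n}}$ outside $\sigma_{\mathbf{n}}^{-1}([0])$, and it is precisely the inclusion ``poles $\subseteq\sigma_{\mathbf{n}}^{-1}([0])$'' that the lemma is used for later (integrality of $\mathbf{z}_{\mathbf{n}}$ over $R(E_{\tau}^{\times})$ in Theorem \ref{thm9}). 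The paper's proof is devoted almost entirely to this point: it takes a sequence $X_{\mathbf{n}}(\tau)\ni\boldsymbol{a}^m\to\boldsymbol{a}$, invokes the boundedness of the accessory parameters $B_m$ from Part I to bound $c(\boldsymbol{a}^m)=\sum_i\zeta(a_i^m)-\tfrac12\sum_k n_k\eta_k$, and concludes $\sum_i\zeta(a_i)=\lim_m\sum_i\zeta(a_i^m)\neq\infty$. Some such compactness input is needed; term-by-term inspection of the two $\zeta$-sums does not settle what happens when several coordinates degenerate to $2$-torsion points simultaneously.

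On $\infty_0$ itself: you correctly flag that a non-cancellation check is required to certify an actual pole there, but you do not carry it out. This half of ``precisely'' is the less important one --- the paper's own proof also stops at $Y_{\mathbf{n}}(\tau)$ and never verifies the pole at $\infty_0$, and only the inclusion of the polar set into $\sigma_{\mathbf{n}}^{-1}([0])$ is used downstream (your $n_0=1$ Lam\'e example, where $\mathbf{z}_{\mathbf{n}}\equiv 0$, shows the reverse inclusion can genuinely fail in degenerate cases). The substantive gap is the missing treatment of the finite branch points.
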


\begin{proof}
Fix any $\boldsymbol{a}=\{a_1,\cdots,a_N\}\in \overline{X_{\mathbf{n}}(\tau)}\setminus \{\infty_0\}= Y_{\mathbf{n}}(\tau)$. It suffices to prove that
\begin{equation}\label{neqinf}
\sum_{i=1}^{N}\zeta(a_{i})\neq \infty.
\end{equation}

If $\boldsymbol{a}\in X_{\mathbf{n}}(\tau)$, i.e.
not a branch point, then (\ref{neqinf}) follows from (\ref{61-381}). So it suffices to consider that $\boldsymbol{a}\in Y_{\mathbf{n}}(\tau)\setminus X_{\mathbf{n}}(\tau)$ is a finite branch point. Then it might happen that $a_{i}=0$ for some $i$'s.
Since the number of branch points is finite, we can take a sequence $X_{\mathbf{n}}(\tau)\ni\boldsymbol{a}^m\to \boldsymbol{a}$. Denote $\boldsymbol{a}^m=\{a^m_1,\cdots,a^m_N\}$. Note from (\ref{set}) that $y_{\boldsymbol{a}^m}(z)$ given by (\ref{yby}) is a solution of H$(\mathbf{n}, B_m,\tau)$ for some $B_m\in\mathbb{C}$. Since $\boldsymbol{a}^m\to \boldsymbol{a}\neq \infty_0$, we proved in Part I \cite{CKL1} that $B_m$ are uniformly bounded and so do $c(\boldsymbol{a}^m)$. Consequently, we see from (\ref{61-381}) that
\[\sum_{i=1}^{N}\zeta(a_{i})=\lim_{m\to\infty}\sum_{i=1}^{N}\zeta(a_{i}^m)\neq\infty.\]
The proof is complete.
\end{proof}

\begin{lemma}
\label{lem2}Let $\boldsymbol{a}, \boldsymbol{b}\in X_{\mathbf{n}}(\tau)$ be not branch points. Suppose
\begin{equation}
\sigma_{\mathbf{n}}( \boldsymbol{a})  =\sigma_{\mathbf{n}}(
\boldsymbol{b})  \text{ \ and \ }\mathbf{z}_{\mathbf{n}}(
\boldsymbol{a})  =\mathbf{z}_{\mathbf{n}}(\boldsymbol{b})  .
\label{rs3}%
\end{equation}
Then $\boldsymbol{a}=\boldsymbol{b}$.
\end{lemma}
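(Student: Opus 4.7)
My plan is to first use the two hypotheses to normalize $\boldsymbol{a}$ and $\boldsymbol{b}$ so that the corresponding common eigenfunctions $y_{\boldsymbol{a}}$, $y_{\boldsymbol{b}}$ from (\ref{yby}) share the same exponential factor and same quasi-periods, then to force the spectral values $B_{\boldsymbol{a}}$, $B_{\boldsymbol{b}}$ to coincide via a cross-Wronskian computation, and finally to apply the one-dimensionality of the common eigenspace to conclude $\boldsymbol{a} = \boldsymbol{b}$. For the reduction, $\sigma_{\mathbf{n}}(\boldsymbol{a}) = \sigma_{\mathbf{n}}(\boldsymbol{b})$ lets me pick lifts of $\boldsymbol{a}, \boldsymbol{b}$ in $\mathbb{C}^N$ with $\sum_i a_i = \sum_i b_i$ exactly, and then $\mathbf{z}_{\mathbf{n}}(\boldsymbol{a}) = \mathbf{z}_{\mathbf{n}}(\boldsymbol{b})$ immediately collapses to $\sum_i \zeta(a_i) = \sum_i \zeta(b_i)$. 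Feeding both equalities into (\ref{rs2}) and (\ref{61-381}), the pairs $(r_{\boldsymbol{a}}, s_{\boldsymbol{a}}), (r_{\boldsymbol{b}}, s_{\boldsymbol{b}}) \in \mathbb{C}^2$ agree and $c(\boldsymbol{a}) = c(\boldsymbol{b}) = r\eta_1 + s\eta_2$. Consequently $y_{\boldsymbol{a}}$ and $y_{\boldsymbol{b}}$ are meromorphic on $\mathbb{C}$, share the prefactor $e^{cz}$, the pole divisor $\sum_k n_k [\omega_k/2]$, and the quasi-periods $(e^{-2\pi i s}, e^{2\pi i r})$, each solving $H(\mathbf{n}, B, \tau)$ for its own spectral value $B_{\boldsymbol{a}}$ or $B_{\boldsymbol{b}}$.

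The central step will be to force $B_{\boldsymbol{a}} = B_{\boldsymbol{b}}$. I would consider the cross-Wronskian $W(z) := y_{\boldsymbol{a}}(z) y_{\boldsymbol{b}}'(z) - y_{\boldsymbol{a}}'(z) y_{\boldsymbol{b}}(z)$, for which the two ODEs and $I_{\mathbf{n}}(z;B_{\boldsymbol{a}},\tau) - I_{\mathbf{n}}(z;B_{\boldsymbol{b}},\tau) = B_{\boldsymbol{a}} - B_{\boldsymbol{b}}$ yield $W'(z) = (B_{\boldsymbol{b}} - B_{\boldsymbol{a}})\, y_{\boldsymbol{a}}(z) y_{\boldsymbol{b}}(z)$. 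Laurent-expanding $y_{\boldsymbol{a}}, y_{\boldsymbol{b}}$ at each $\omega_k/2$ via the product form (\ref{yby}), the shared $c$ and the shared subleading invariants---from Step~1 together with the characterizing constraints (\ref{I-050})--(\ref{I-051}) that $\boldsymbol{a}, \boldsymbol{b}$ satisfy by Theorem \ref{thm4-111}---should suppress the top-order singular terms of $W$ at each pole. Combining the resulting pole-order bounds with the global identity $\sum_{[z] \in E_\tau}\mathrm{Res}_z W' = 0$ (valid since $W$ is single-valued meromorphic on $\mathbb{C}$) and the non-branch hypothesis $\boldsymbol{a}, \boldsymbol{b} \in X_{\mathbf{n}}(\tau)$ (which rules out degenerate vanishing of the remaining leading data), I expect to obtain $B_{\boldsymbol{a}} = B_{\boldsymbol{b}}$. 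With $B := B_{\boldsymbol{a}} = B_{\boldsymbol{b}}$ in hand, both $y_{\boldsymbol{a}}$ and $y_{\boldsymbol{b}}$ lie in the common $(e^{-2\pi i s}, e^{2\pi i r})$-eigenspace of $\rho(\ell_1), \rho(\ell_2)$ in the two-dimensional solution space of $H(\mathbf{n}, B, \tau)$; since $(r,s) \notin \tfrac{1}{2}\mathbb{Z}^2$ the eigenvalues of $\rho(\ell_1)$ are distinct, so this common eigenspace is one-dimensional. Hence $y_{\boldsymbol{a}} = \lambda\, y_{\boldsymbol{b}}$ for some $\lambda \in \mathbb{C}^*$, and comparing the (simple, by the non-branch assumption) zero divisors gives $\boldsymbol{a} = \boldsymbol{b}$.

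The main obstacle I anticipate is precisely the residue/Laurent analysis used to extract $B_{\boldsymbol{a}} = B_{\boldsymbol{b}}$: the cancellation at $z = 0$ coming directly from Step~1 is transparent, but the analogous cancellations at $\omega_k/2$ for $k = 1, 2, 3$ must be coaxed out of (\ref{I-051}) more delicately, and one still has to rule out the possibility that the residues of $y_{\boldsymbol{a}} y_{\boldsymbol{b}}$ sum to zero for algebraic reasons unrelated to the vanishing of $B_{\boldsymbol{a}} - B_{\boldsymbol{b}}$ itself. The non-branch, completely-reducible regime and the full characterization afforded by Theorem \ref{thm4-111} are the inputs I would rely on to preclude this.
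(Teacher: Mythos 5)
Your normalization step (choosing lifts with $\sum_i a_i=\sum_i b_i$ and $\sum_i\zeta(a_i)=\sum_i\zeta(b_i)$, hence equal $(r,s)$ and equal $c(\boldsymbol{a})=c(\boldsymbol{b})$) and your final step (one-dimensionality of the common eigenspace once $B_{\boldsymbol{a}}=B_{\boldsymbol{b}}$ is known, since $(r,s)\notin\frac12\mathbb{Z}^2$) are both correct. The gap is the middle step, and it is fatal as proposed: the identity $W'=(B_{\boldsymbol{b}}-B_{\boldsymbol{a}})\,y_{\boldsymbol{a}}y_{\boldsymbol{b}}$ cannot detect $B_{\boldsymbol{a}}-B_{\boldsymbol{b}}$, because every residue of $y_{\boldsymbol{a}}y_{\boldsymbol{b}}$ vanishes identically for parity reasons, independently of the spectral values. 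Indeed, $I_{\mathbf{n}}$ is invariant under $z\mapsto\omega_k-z$, the local exponents $-n_k$ and $n_k+1$ differ by the odd integer $2n_k+1$, and all solutions are meromorphic; hence each of $y_{\boldsymbol{a}}$, $y_{\boldsymbol{b}}$ decomposes locally at $\frac{\omega_k}{2}$ into a component of parity $(-1)^{n_k}$ with a pole of order $n_k$ and a component of parity $(-1)^{n_k+1}$ vanishing to order $n_k+1$. Every cross term in $y_{\boldsymbol{a}}y_{\boldsymbol{b}}$ is then either even about $\frac{\omega_k}{2}$ (only even powers of $z-\frac{\omega_k}{2}$) or odd and holomorphic, so the coefficient of $(z-\frac{\omega_k}{2})^{-1}$ is zero in all cases. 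Since $y_{\boldsymbol{a}}y_{\boldsymbol{b}}$ has no other poles, $\sum\mathrm{Res}\,W'=0$ holds term by term and trivially, exactly the ``algebraic reason unrelated to $B_{\boldsymbol{a}}-B_{\boldsymbol{b}}$'' you feared. (A further obstruction: $y_{\boldsymbol{a}}y_{\boldsymbol{b}}$ carries multipliers $e^{-4\pi is},e^{4\pi ir}$, so it is not even elliptic; and by \eqref{I-66}, $B_{\boldsymbol{a}}=B_{\boldsymbol{b}}$ is equivalent to $\sum\wp(a_i)=\sum\wp(b_i)$, which does not follow from the two equalities you have in hand.)

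The paper avoids this by never trying to prove $B_1=B_2$ first. It pairs $y_{\boldsymbol{a}}$ with $y_{-\boldsymbol{b}}$ (reciprocal multipliers, so the product \emph{is} elliptic) and uses the fact that a product of solutions of $H(\mathbf{n},B_1,\tau)$ and $H(\mathbf{n},B_2,\tau)$ with \emph{different} $B$'s satisfies the fourth-order equation \eqref{forth-order}, whose local exponents at $\frac{\omega_k}{2}$ are \eqref{loc-ex}. The combination $q=y_{\boldsymbol{a}}y_{-\boldsymbol{b}}-y_{-\boldsymbol{a}}y_{\boldsymbol{b}}$ is an odd elliptic solution, so at each $\frac{\omega_k}{2}$ it must realize an odd exponent ($1$ or $3$) rather than $-2n_k$; hence $q$ has no poles and, being odd elliptic, vanishes identically. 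Then $y_{\boldsymbol{a}}y_{-\boldsymbol{b}}$ is even, the zero divisors give $\boldsymbol{a}\cup(-\boldsymbol{b})=(-\boldsymbol{a})\cup\boldsymbol{b}$, and \eqref{a-0a} forces $\boldsymbol{a}=\boldsymbol{b}$, with $B_1=B_2$ obtained only afterwards as a corollary. If you want to keep your architecture, you would need to replace the residue computation by an argument of this type; as written, the proposal does not close.
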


\begin{proof}
Under our assumption (\ref{rs3}), we can take $(a_1,\cdots,a_N), (b_1,\cdots,b_N)\in\mathbb{C}^N$ to be representatives of $\boldsymbol{a}, \boldsymbol{b}$ such that
\begin{equation}\label{a=b}\sum_{i=1}^N a_i=\sum_{i=1}^N b_i,\quad\sum_{i=1}^{N}\zeta(a_{i})=\sum_{i=1}^{N}\zeta(b_{i}).\end{equation}
By (\ref{set}), there exist $B_1, B_2$ such that $y_{\boldsymbol{a}}(z)$ (resp. $y_{\boldsymbol{b}}(z)$) given by (\ref{yby}) is a solution of H$(\mathbf{n}, B_1,\tau)$ (resp. H$(\mathbf{n}, B_2,\tau)$). Then (\ref{rs2}), (\ref{61.3512}) and (\ref{a=b}) imply that H$(\mathbf{n}, B_1,\tau)$ and H$(\mathbf{n}, B_2,\tau)$) have the same global monodromy data $(r,s)\notin\frac{1}{2}\mathbb{Z}^2$, namely $y_{\boldsymbol{a}}(z)$ and $y_{\boldsymbol{b}}(z)$, which are solutions of H$(\mathbf{n}, B_1,\tau)$ and H$(\mathbf{n}, B_2,\tau)$ respectively, satisfy the same transformation law:
\begin{equation}\label{same-trans}y(z+\omega_1)=e^{-2\pi i s}y(z),\quad y(z+\omega_2)=e^{2\pi i r}y(z).\end{equation}

Now we use the following interesting observation from \cite[Lemma 3.5]{LW2}: Denote $I=\sum_{k=0}^3n_k(n_k+1)\wp(z+\frac{\omega_k}{2})$ and $I_j=I+B_j$ for $j=1,2$. Suppose $w_j''=I_jw_j$ for $j=1,2$. Then $w_1w_2$ satisfies the following forth order ODE:
\begin{equation}\label{forth-order}
q''''-2(I_1+I_2)q''-6I'q'+((B_1-B_2)^2-2I'')q=0.
\end{equation}
This statement can be proved by direct computations.
Furthermore, it is easy to see that the local exponents of (\ref{forth-order}) at $\frac{\omega_k}{2}$ are
\begin{equation}\label{loc-ex}-2n_k,\quad 1,\quad 3,\quad 2n_k+2.\end{equation}
Recalling $y_{-\boldsymbol{a}}(z)=(-1)^{n_1+n_2+n_3}y_{\boldsymbol{a}}(-z)$ and $y_{-\boldsymbol{b}}(z)=(-1)^{n_1+n_2+n_3}y_{\boldsymbol{b}}(-z)$, it follows from (\ref{same-trans}) that
\[q(z):=y_{\boldsymbol{a}}(z)y_{-\boldsymbol{b}}(z)-y_{-\boldsymbol{a}}(z)y_{\boldsymbol{b}}(z)\]
is \emph{an odd elliptic solution} of (\ref{forth-order}). Consequently, (\ref{loc-ex}) infers that $\frac{\omega_k}{2}$ must be a zero of $q(z)$ (with order $1$ or $3$) for any $k$. This implies that $q(z)$ has no poles and so $q(z)\equiv 0$, i.e. $y_{\boldsymbol{a}}(z)y_{-\boldsymbol{b}}(z)$ is even. This implies the zero set $\boldsymbol{a}\cup (-\boldsymbol{b})=(-\boldsymbol{a})\cup \boldsymbol{b}$, and it follows from (\ref{a-0a}) (i.e. $[a_i]\neq -[a_j]$, $[b_i]\neq -[b_j]$ for any $i, j$) that $\boldsymbol{a}=\boldsymbol{b}$. Clearly this also infers $B_1=B_2$.
\end{proof}

\begin{theorem}
\label{thm9}There is a weighted homogeneous polynomial
\begin{equation}\label{minipol}
W_{\mathbf{n}}(\mathbf{z})  \in \mathbb{Q}[e_{1}(\tau), e_2(\tau), e_3(\tau), \wp(\sigma|\tau)
,\wp^{\prime}(\sigma|\tau)][\mathbf{z}]
\end{equation}
of $\mathbf{z}$-degree $d_{\mathbf{n}}=\deg \sigma_{\mathbf{n}}$ such that for $\sigma=\sigma_{\mathbf{n}}(\boldsymbol{a})$, we have
\begin{equation}
W_{\mathbf{n}}(\mathbf{z}_{\mathbf{n}})(\boldsymbol{a})=0. \label{zeroo}%
\end{equation}
Here, the weights of $\mathbf{z}$, $\wp(\sigma)$, $e_k$'s, $\wp'(\sigma)$ are $1$, $2$, $2$, $3$ respectively.

Indeed, $\mathbf{z}_{\mathbf{n}}(\boldsymbol{a})$ is a primitive generator of the finite extension of rational function field $K(\overline{X_{\mathbf{n}}(\tau)})$ over $K(E_{\tau})$ with $W_{\mathbf{n}}(\mathbf{z})$ being its minimal polynomial.
\end{theorem}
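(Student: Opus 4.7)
The plan is to construct $W_{\mathbf{n}}$ as the minimal polynomial of $\mathbf{z}_{\mathbf{n}}$ over $K(E_\tau)$ and then verify it has the claimed structural form. First I would show $\mathbf{z}_{\mathbf{n}}$ is a primitive generator: for generic $\sigma\in E_\tau$, the fiber $\sigma_{\mathbf{n}}^{-1}(\sigma)$ consists of $d_{\mathbf{n}}$ distinct non-branch points (branch points being finite in number by item (iv)), and Lemma \ref{lem2} guarantees $\mathbf{z}_{\mathbf{n}}$ takes distinct values on them. Combined with (\ref{finite-ext}), this forces $K(\overline{X_{\mathbf{n}}(\tau)})=K(E_\tau)(\mathbf{z}_{\mathbf{n}})$, so $\mathbf{z}_{\mathbf{n}}$ is a primitive element and its minimal polynomial over $K(E_\tau)$ has degree exactly $d_{\mathbf{n}}$.

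Next I would define
\[
W_{\mathbf{n}}(\mathbf{z}):=\prod_{[\boldsymbol{a}]\in\sigma_{\mathbf{n}}^{-1}(\sigma)}\bigl(\mathbf{z}-\mathbf{z}_{\mathbf{n}}(\boldsymbol{a})\bigr),
\]
whose coefficients, as elementary symmetric functions of $\mathbf{z}_{\mathbf{n}}$ along fibers, descend to rational functions of $\sigma\in E_\tau$. The key step is to bound their poles: by Lemma \ref{lemma-pole} the poles of $\mathbf{z}_{\mathbf{n}}$ on $\overline{X_{\mathbf{n}}(\tau)}$ lie only over $[0]\in E_\tau$, so the push-forward through $\sigma_{\mathbf{n}}$ produces functions on $E_\tau$ with poles confined to $\sigma=0$. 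Any meromorphic function on $E_\tau$ with pole only at $0$ has the form $P(\wp(\sigma))+\wp'(\sigma)Q(\wp(\sigma))$ with polynomials $P,Q$, and invoking the Weierstrass relation $\wp'(\sigma)^2=4(\wp(\sigma)-e_1)(\wp(\sigma)-e_2)(\wp(\sigma)-e_3)$ we may take such expressions with coefficients in $\mathbb{C}[e_1,e_2,e_3]$. Thus each coefficient of $W_{\mathbf{n}}$ lies in $\mathbb{C}[e_1,e_2,e_3,\wp(\sigma),\wp'(\sigma)]$.

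For rationality over $\mathbb{Q}$, I would trace through the defining data: the system (\ref{I-050})--(\ref{I-051}) cuts $X_{\mathbf{n}}(\tau)$ out as a variety in the symmetric product via polynomials with $\mathbb{Z}$-coefficients in $e_k,\wp(a_i),\wp'(a_i)$, and $\mathbf{z}_{\mathbf{n}}$ can be reduced to a rational expression in these Weierstrass values using the addition formula $\zeta(u+v)=\zeta(u)+\zeta(v)+\tfrac{1}{2}(\wp'(u)-\wp'(v))/(\wp(u)-\wp(v))$, whose coefficients are rational. Elimination via resultants and Newton's identities then produces the symmetric functions in $\mathbb{Q}[e_1,e_2,e_3,\wp(\sigma),\wp'(\sigma)]$. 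Weighted homogeneity follows from the Weierstrass scaling $z\mapsto\lambda z$, under which $\zeta,\wp,\wp',e_k$ acquire weights $1,2,3,2$; since each $\mathbf{z}_{\mathbf{n}}(\boldsymbol{a})$ has weight $1$, the product $W_{\mathbf{n}}$ is weighted homogeneous of degree $d_{\mathbf{n}}$.

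The main technical obstacle is the rationality assertion: while the pole analysis identifies $\mathbb{C}[e_1,e_2,e_3,\wp(\sigma),\wp'(\sigma)]$ as the natural target, cutting down to $\mathbb{Q}$-coefficients requires carefully verifying that converting $\zeta$-values to Weierstrass data and eliminating against (\ref{I-050})--(\ref{I-051}) introduces no algebraic irrationalities. There is also the subtlety that $X_{\mathbf{n}}(\tau)$ possesses finitely many branch points where the naive fiber-product form of $W_{\mathbf{n}}$ degenerates; the argument should proceed on a Zariski-open subset of $E_\tau$ and extend by continuity, using that the coefficients are rational functions with the already-established pole bound.
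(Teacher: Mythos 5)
Your proposal follows essentially the same route as the paper: primitivity of $\mathbf{z}_{\mathbf{n}}$ via fiber counting plus the injectivity from Lemma \ref{lem2}, integrality/monicity of the minimal polynomial from the pole location in Lemma \ref{lemma-pole} (your symmetric-function push-forward with the pole bound at $\sigma=0$ is the same argument as the paper's integrality over the affine Weierstrass coordinate ring), and $\mathbb{Q}$-rationality by elimination against the $\mathbb{Q}$-defined equations of $X_{\mathbf{n}}(\tau)$ together with the addition formulas. The argument is correct and the points you flag as delicate (rationality, degeneration at branch points) are handled in the paper at the same level of detail.
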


\begin{proof} Thanks to Lemmas \ref{lemma-pole}-\ref{lem2}, the proof is similar to \cite[Theorem 3.2]{LW2}. Here we give the proof for completeness.

Recall (\ref{finite-ext}). Since $\mathbf{z}_{\mathbf{n}}\in K(  \overline{X_{\mathbf{n}}(\tau)})$, its minimal polynomial $W_{\mathbf{n}}(\mathbf{z})\in K(E_{\tau})[\mathbf{z}]$ exists with degree
$d_{\mathbf{n}}:=\deg W_{\mathbf{n}}|\deg \sigma_{\mathbf{n}}$.

Note that if $\boldsymbol{a}$ is a branch point, then it follows from (\ref{bran}) that $\sigma_{\mathbf{n}}(\boldsymbol{a})\in E_{\tau}[2]$.
To prove $d_{\mathbf{n}}=\deg \sigma_{\mathbf{n}}$, i.e. $\mathbf{z}_{\mathbf{n}}(\boldsymbol{a})$ is a primitive generator, we take $\sigma_0\in E_{\tau}\setminus E_{\tau}[2]$ outside the branch loci of $\sigma_{\mathbf{n}}: \overline{X_{\mathbf{n}}(\tau)}\to E_{\tau}$. Then there are precisely $\deg \sigma_{\mathbf{n}}$ different points $\boldsymbol{a}\in X_{\mathbf{n}}(\tau)$ with $\sigma_{\mathbf{n}}(\boldsymbol{a})=\sigma_0$, and Lemma \ref{lem2} shows that these $\deg \sigma_{\mathbf{n}}$ different points $\boldsymbol{a}$ give $\deg \sigma_{\mathbf{n}}$ different values $\mathbf{z}_{\mathbf{n}}(\boldsymbol{a})$. Therefore, for $\sigma=\sigma_0$, the polynomial $W_{\mathbf{n}}(\mathbf{z})$ of degree $d_{\mathbf{n}}|\deg \sigma_{\mathbf{n}}$ has $\deg \sigma_{\mathbf{n}}$ distinct zeros, which implies $d_{\mathbf{n}}=\deg \sigma_{\mathbf{n}}$.

Since Lemma \ref{lemma-pole} shows that $\mathbf{z}_{\mathbf{n}}$ has no poles over $E_{\tau}^{\times}:=E_{\tau}\setminus\{[0]\}$, it is indeed {\it integral} over the affine Weierstrass model of $E_{\tau}^{\times}$ with the coordinate ring (let $x=\wp(\sigma)$, $y=\wp'(\sigma)$)
\[R(E_{\tau}^{\times})=\mathbb{C}[x, y]/(y^2-4(x-e_1)(x-e_2)(x-e_3)),\]
i.e. the minimal polynomial $W_{\mathbf{n}}$ is monic in $R(E_{\tau}^{\times})[\mathbf{z}]$.
This implies (\ref{minipol}) and the homogeneity of $W_{\mathbf{n}}$, where as in \cite{LW2}, the coefficients lie in $\mathbb{Q}$, instead of just in $\mathbb{C}$, follows from standard elimination theory and two facts: (i) The equations (\ref{I-050})-(\ref{I-051}) of $X_{\mathbf{n}}(\tau)$ (see (\ref{set111})) are defined over $\mathbb{Q}[e_1,e_2,e_3]$, and (ii) the addition map $E_{\tau}^N\to E_{\tau}$ is defined over $\mathbb{Q}$ which, together with the addition formulas of elliptic functions, infers that
\begin{align*}
\mathbf{z}_{\mathbf{n}}(\boldsymbol{a})&=\zeta\Bigg(\sum_{i=1}^N a_i-\sum_{k=1}^{3}\frac{n_{k}\omega_{k}}{2}\Bigg)-\sum_{i=1}^{N}\zeta(a_{i})  +\sum_{k=1}^{3}\frac{n_{k}\eta_{k}
}{2}\\
&=\sum_{j_1<\cdots<j_m;\, m\, \text{odd}} f^{(1)}_{j_1,\cdots, j_m}(\wp(a_{j_1}),\cdots,\wp(a_{j_m}))\prod_{i=1}^m\wp'(a_{j_i}),
\end{align*}
\begin{align*}
\wp(\sigma_{\mathbf{n}}( \boldsymbol{a}))&=\wp\Bigg(\sum_{i=1}^N a_i-\sum_{k=1}^{3}\frac{n_{k}\omega_{k}}{2}\Bigg)\\
&=\sum_{j_1<\cdots<j_m;\, m\, \text{even}} f^{(2)}_{j_1,\cdots, j_m}(\wp(a_{j_1}),\cdots,\wp(a_{j_m}))\prod_{i=1}^m\wp'(a_{j_i}),
\end{align*}
\begin{align*}
\wp'(\sigma_{\mathbf{n}}( \boldsymbol{a}))
=\sum_{j_1<\cdots<j_m;\, m\, \text{odd}} f^{(3)}_{j_1,\cdots, j_m}(\wp(a_{j_1}),\cdots,\wp(a_{j_m}))\prod_{i=1}^m\wp'(a_{j_i}),
\end{align*}
where $f^{(k)}_{j_1,\cdots, j_m}(x_1,\cdots,x_m)\in \mathbb{Q}(e_1,e_2,e_3)(x_1,\cdots,x_m)$. The minimal polynomial $W_{\mathbf{n}}$ is obtained by eliminating the terms $\wp(a_{j}), \wp'(a_j)$'s via these formulas and the equations (\ref{I-050})-(\ref{I-051}) of $X_{\mathbf{n}}(\tau)$.
The proof is complete.
\end{proof}

As in \cite{LW2}, for any $(r,s)\in \mathbb{C}^2\backslash \frac{1}{2}\mathbb{Z}^{2}$, we define
\begin{equation}
Z_{r,s}(\tau)  :=\zeta(r+s\tau|\tau)  -r\eta_{1}(\tau)-s\eta_{2}(\tau), \label{zrs}%
\end{equation}
(note $Z_{r,s}(\tau)\equiv \infty$ if $(r,s)\in \mathbb{Z}^2$ and $Z_{r,s}(\tau)\equiv 0$ if $(r,s)\in \frac{1}{2}\mathbb{Z}^{2}\setminus\mathbb{Z}^2$) and
\begin{equation}
Z^{\mathbf{n}}_{r,s}(\tau)  :=W_{\mathbf{n}}(Z_{r,s})(r+s\tau;\tau), \label{zrs1}%
\end{equation}
i.e. by letting $\sigma=r+s\tau$ and $\mathbf{z}=Z_{r,s}(\tau)$ in (\ref{minipol}). Clearly this $Z^{\mathbf{n}}_{r,s}(\tau)$ is holomorphic in $\tau$ for given $(r,s)\in\mathbb{R}^2\setminus\frac{1}{2}\mathbb{Z}^2$. We show that this $Z^{\mathbf{n}}_{r,s}(\tau)$ is precisely the pre-modular form in Theorem \ref{thm-premodular}.

\begin{theorem}[=Theorem \ref{thm-premodular}]
\label{thm-premodular-copy}  Let $Z_{r,s}^{\mathbf{n}}(\tau)$ be defined in (\ref{zrs1}). Then the followings hold.

\begin{enumerate}
\item[(a)] $Z_{r,s}^{\mathbf{n}}(\tau)$ is a pre-modular form in the sense that, if $(r,s)=(\frac{k_1}{m},\frac{k_2}{m})$ with $m\in 2\mathbb{N}%
_{\geq 2}$, $k_1,k_2\in\mathbb{Z}_{\geq 0}$ and $\gcd(k_1,k_2,m)=1$, then $%
Z_{r,s}^{\mathbf{n}}(\tau)$ is a modular form of weight $\sum_{k=0}^3n_k(n_k+1)/2$ with respect to the principal congruence subgroup $%
\Gamma(m)$.

\item[(b)] For $(r,s)\in\mathbb{C}^2\setminus\frac{1}{2}\mathbb{Z}^2$ and $\tau_0\in\mathbb{H}$ such that $r+s\tau_0\notin \Lambda_{\tau_0}$, $Z_{r,s}^{\mathbf{n}}(\tau_0)=0$ if and only
if there is $B\in\mathbb{C}$ such that H$(\mathbf{n}, B, \tau_0)$
has its monodromy matrices $\rho(\ell_1)$ and $\rho(\ell_2)$ given by \eqref{61.3512}.
\end{enumerate}
\end{theorem}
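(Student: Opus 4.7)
The approach is to deduce Theorem \ref{thm-premodular-copy} from Theorem \ref{thm9} by reading the pre-modular form $Z_{r,s}^{\mathbf{n}}(\tau_0)$ as the specialization of the minimal polynomial $W_{\mathbf{n}}$ at the point $(\sigma,\mathbf{z})=(r+s\tau_0,\,Z_{r,s}(\tau_0))$. The bridge, to be used in both directions of (b), is that for every $\boldsymbol{a}\in X_{\mathbf{n}}(\tau_0)$ carrying an H$(\mathbf{n},B,\tau_0)$ with completely reducible monodromy $(r,s)$, the system (\ref{rs2}) reads exactly
\[\sigma_{\mathbf{n}}(\boldsymbol{a})=r+s\tau_0\text{ in }E_{\tau_0}\quad\text{and}\quad \mathbf{z}_{\mathbf{n}}(\boldsymbol{a})=Z_{r,s}(\tau_0).\]

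For (b) $\Rightarrow$, such an $\boldsymbol{a}$ is produced by Part I, and Theorem \ref{thm9} gives $W_{\mathbf{n}}(\mathbf{z}_{\mathbf{n}}(\boldsymbol{a}))(\boldsymbol{a})=0$, i.e.\ $Z_{r,s}^{\mathbf{n}}(\tau_0)=0$. For (b) $\Leftarrow$, set $\sigma_0:=[r+s\tau_0]\in E_{\tau_0}$. The hypothesis $r+s\tau_0\notin\Lambda_{\tau_0}$ gives $\sigma_0\neq[0]$, so Lemma \ref{lemma-pole} makes $\mathbf{z}_{\mathbf{n}}$ regular on $\sigma_{\mathbf{n}}^{-1}(\sigma_0)$. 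Since $W_{\mathbf{n}}$ is the minimal polynomial of $\mathbf{z}_{\mathbf{n}}$ over $K(E_{\tau_0})$, the proof of Theorem \ref{thm9} (via Lemma \ref{lem2}) shows that the values $\mathbf{z}_{\mathbf{n}}(\boldsymbol{a})$ for $\boldsymbol{a}$ in this fiber are exactly the $d_{\mathbf{n}}$ roots (with multiplicity) of the specialization $W_{\mathbf{n}}(\mathbf{z})|_{\sigma=\sigma_0}$. Hence $Z_{r,s}^{\mathbf{n}}(\tau_0)=0$ produces some $\boldsymbol{a}\in\overline{X_{\mathbf{n}}(\tau_0)}$ satisfying both identifications above. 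Because every branch point of $\sigma_{\mathbf{n}}$ lies over $E_{\tau_0}[2]$ by (\ref{bran}) together with (\ref{infty0}), the assumption $(r,s)\notin\tfrac12\mathbb{Z}^2$ places $\boldsymbol{a}\in X_{\mathbf{n}}(\tau_0)$, and inverting (\ref{rs2}) recovers a $B$ for which H$(\mathbf{n},B,\tau_0)$ has monodromy matrices (\ref{61.3512}).

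For part (a), the modular transformation is forced by the weighted homogeneity of $W_{\mathbf{n}}$. For $\gamma=\bigl(\begin{smallmatrix}a&b\\c&d\end{smallmatrix}\bigr)\in\Gamma(m)$, the congruence $\gamma\equiv I_2\pmod m$ keeps $(r,s)=(k_1/m,k_2/m)$ fixed modulo $\mathbb{Z}^2$, and the classical transformation laws of $\zeta$, $\wp$, $\wp'$ on $\mathbb{H}$ give, with $\sigma:=r+s\tau$,
\[Z_{r,s}(\gamma\tau)=(c\tau+d)Z_{r,s}(\tau),\ \wp(\sigma|\gamma\tau)=(c\tau+d)^2\wp(\sigma|\tau),\ \wp'(\sigma|\gamma\tau)=(c\tau+d)^3\wp'(\sigma|\tau),\]
while $2\mid m$ yields $e_k(\gamma\tau)=(c\tau+d)^2 e_k(\tau)$ for $k=1,2,3$. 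Substituting these into the defining identity $Z_{r,s}^{\mathbf{n}}(\tau)=W_{\mathbf{n}}(Z_{r,s})(r+s\tau;\tau)$ and using that $W_{\mathbf{n}}$ is weighted homogeneous of total weight $d_{\mathbf{n}}=\tfrac12\sum n_k(n_k+1)$ (with weights $1,2,2,3$ on $\mathbf{z},\wp,e_k,\wp'$) yields the modular identity $Z_{r,s}^{\mathbf{n}}(\gamma\tau)=(c\tau+d)^{d_{\mathbf{n}}}Z_{r,s}^{\mathbf{n}}(\tau)$. Holomorphy on $\mathbb{H}$ for real torsion pairs is immediate from $r+s\tau\notin\Lambda_\tau$.

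The main obstacle is the branch-locus bookkeeping in (b) $\Leftarrow$: one must confirm that the $\boldsymbol{a}$ extracted from a root of $W_{\mathbf{n}}$ genuinely lies in the unramified locus $X_{\mathbf{n}}(\tau_0)$ and that the monodromy recovered from $\boldsymbol{a}$ really is the prescribed $(r,s)$, not merely a representative of its class in $\mathbb{C}^2/\mathbb{Z}^2$. Everything else is a direct substitution into Theorem \ref{thm9} and a transcription of the transformation-law arguments from \cite{LW2}.
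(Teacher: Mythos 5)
Your proof follows essentially the same route as the paper's: part (a) from the standard transformation laws of $Z_{r,s}$, $\wp(r+s\tau|\tau)$, $\wp'(r+s\tau|\tau)$ and $e_k$ together with the weighted homogeneity of $W_{\mathbf{n}}$, and part (b) by identifying (\ref{rs2}) with $\sigma_{\mathbf{n}}(\boldsymbol{a})=[r+s\tau_0]$, $\mathbf{z}_{\mathbf{n}}(\boldsymbol{a})=Z_{r,s}(\tau_0)$ and invoking Theorem \ref{thm9} in both directions. One caution on your branch-locus bookkeeping in (b) $\Leftarrow$: for \emph{complex} $(r,s)$ the hypothesis $(r,s)\notin\tfrac{1}{2}\mathbb{Z}^2$ does not by itself force $[r+s\tau_0]\notin E_{\tau_0}[2]$, so to rule out that the extracted $\boldsymbol{a}$ is a branch point one should instead observe that a branch point makes $y_{\boldsymbol{a}}$ and $y_{-\boldsymbol{a}}$ linearly dependent, which via the transformation law would put the monodromy data in $\tfrac{1}{2}\mathbb{Z}^2$ --- a point the paper's own proof also passes over without comment.
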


\begin{proof} (a) It is well-known that $e_k(\tau)$, $k=1,2,3$, are all modular forms of weight $2$ with respect to $\Gamma(2)$. Since $Z_{r,s}(\tau)$, $\wp(r+s\tau|\tau)$ and $\wp'(r+s\tau|\tau)$ are modular forms of weight $1$, $2$, $3$ respectively, with respect to $\Gamma(m)$ for any $m$-torsion point $(r,s)=(\frac{k_1}{m},\frac{k_2}{m})$, $m\geq 3$, the assertion (a) follows directly from the homogeneity of $W_{\mathbf{n}}$ in Theorem \ref{thm9}.

(b) First we prove the sufficient part. Suppose for some H$(\mathbf{n},B,\tau_0)$, its monodromy matrices $\rho(\ell_1)$ and $\rho(\ell_2)$ are given by \eqref{61.3512}, i.e. the monodromy is completely reducible.
Then there exists $\boldsymbol{a}\in X_{\mathbf{n}}(\tau_0)$ such that $y_{\boldsymbol{a}}(z)$ is a solution of H$(\mathbf{n},B,\tau_0)$ and (\ref{rs2}) holds, i.e.
\begin{equation}\label{rrs}\left \{
\begin{array}
[c]{l}%
\sum_{i=1}^{N}a_{i}-\sum_{k=1}^{3}\frac{n_{k}\omega_{k}}{2}=r+s\tau_0 \\
\sum_{i=1}^{N}\zeta(a_{i})  -\sum_{k=1}^{3}\frac{n_{k}\eta_{k}
}{2}=r\eta_{1}+s\eta_{2}.
\end{array}
\right.\end{equation}
From here we have $\sigma_{\mathbf{n}}(\boldsymbol{a})=[r+s\tau_0]\neq [0]$ and
\begin{align*}
\mathbf{z}_{\mathbf{n}}(\boldsymbol{a})=\zeta(r+s\tau_0|\tau_0)
-r\eta_1(\tau_0)-s\eta_2(\tau_0)=Z_{r,s}(\tau_0).
\end{align*}
Then it follows from (\ref{zeroo}) and (\ref{zrs1}) that $Z_{r,s}^{\mathbf{n}}(\tau_0)=0$.

Conversely, suppose $Z_{r,s}^{\mathbf{n}}(\tau_0)=0$. Then it follows from (\ref{zrs1}) that for $\sigma=r+s\tau_0\notin \Lambda_{\tau_0}$, $Z_{r,s}(\tau_0)$ is a zero of $W_{\mathbf{n}}(\mathbf{z})$. This, together with Theorem \ref{thm9}, implies the existence of $\boldsymbol{a}\in \overline{X_{\mathbf{n}}(\tau_0)}\setminus\{\infty_0\}=Y_{\mathbf{n}}(\tau_0)$ such that
\[\sum_{i=1}^{N}a_{i}-\sum_{k=1}^{3}\frac{n_{k}\omega_{k}}{2}=\sigma=r+s\tau_0,\quad \mathbf{z}_{\mathbf{n}}(\boldsymbol{a})=Z_{r,s}(\tau_0),\]
which is equivalent to (\ref{rrs}). The definition (\ref{set}) of $Y_{\mathbf{n}}(\tau_0)$ yields that $y_{\boldsymbol{a}}(z)$ is a solution of some H$(\mathbf{n}, B, \tau_0)$. By (\ref{rrs}), we see that with respect to $y_{\boldsymbol{a}}(z)$ and $y_{-\boldsymbol{a}}(z)$, the monodromy matrices $\rho(\ell_1)$ and $\rho(\ell_2)$ are given by \eqref{61.3512}.

The proof is complete.
\end{proof}

We conclude this section by the following remark.

After Theorem \ref{thm-premodular}, a further question arises: {\it What are the explicit expressions of these pre-modular forms}? This question is very difficult because the weight $\frac{1}{2}\sum n_k(n_k+1)$ is large for general $\mathbf{n}$.
It is known from \cite{Dahmen,LW2} that (write $Z=Z_{r,s}(\tau)$, $\wp=\wp(r+s\tau|\tau)$ and $\wp^{\prime
}=\wp^{\prime}(r+s\tau|\tau)$ for convenience):
\[
Z_{r,s}^{(1,0,0,0)}=Z,\quad Z_{r,s}^{(2,0,0,0)}=Z^{3}-3\wp Z-\wp^{\prime},
\]%
\begin{align*}
Z_{r,s}^{(3,0,0,0)}=  &  Z^{6}-15\wp Z^{4}-20\wp^{\prime}Z^{3}+\left(
\tfrac{27}{4}g_{2}-45\wp^{2}\right)  Z^{2}\\
&  -12\wp \wp^{\prime}Z-\tfrac{5}{4}(\wp^{\prime})^{2}.
\end{align*}
{\allowdisplaybreaks%
\begin{align*}
Z_{r,s}^{(4,0,0,0)}=  &  Z^{10}-45\wp Z^{8}-120\wp^{\prime}Z^{7}+(\tfrac
{399}{4}g_{2}-630\wp^{2})Z^{6}-504\wp \wp^{\prime}Z^{5}\\
&  -\tfrac{15}{4}(280\wp^{3}-49g_{2}\wp-115g_{3})Z^{4}+15(11g_{2}-24\wp
^{2})\wp^{\prime}Z^{3} \\
&  -\tfrac{9}{4}(140\wp^{4}-245g_{2}\wp^{2}+190g_{3}\wp+21g_{2}^{2}%
)Z^{2}\label{z-n-4}\\
&  -(40\wp^{3}-163g_{2}\wp+125g_{3})\wp^{\prime}Z+\tfrac{3}{4}(25g_{2}%
-3\wp^{2})(\wp^{\prime})^{2}.
\end{align*}
}%
The above formulas are all for the Lam\'{e} case.
For $n\geq 5$, the explicit expression of $Z_{r,s}^{(n,0,0,0)}(\tau)$ is not known so far. See \cite{CKLW,Dahmen,LW2} for applications of the above formulas of $Z_{r,s}^{(n,0,0,0)}(\tau)$, $n\leq 4$.

Here are new examples of $Z_{r,s}^{\bf n}(\tau)$ for the Treibich-Verdier potential case:
\[Z_{r,s}^{(1,1,0,0)}=Z^2-\wp+e_1,\]
\[Z_{r,s}^{(1,0,1,0)}=Z^2-\wp+e_2,\quad Z_{r,s}^{(1,0,0,1)}=Z^2-\wp+e_3,\]
\[Z_{r,s}^{(2,1,0,0)}=Z^4+3(e_1-2\wp)Z^2-4\wp'Z-3(\wp^2+e_1\wp+e_1^2-\tfrac{g_2}{4}),\]
and similarly, the expression of $Z_{r,s}^{(2,0,1,0)}$ (resp. $Z_{r,s}^{(2,0,0,1)}$) is obtained by replacing $e_1$ in $Z_{r,s}^{(2,1,0,0)}$ with $e_2$ (resp. $e_3$).
The proof of these new formulas will be given in a forthcoming work, where we will also study further properties of these pre-modular forms, such as the following interesting formula:
\[Z_{r,s}^{(1,1,0,0)}(\tau)=4Z_{r,\frac{s}{2}}(2\tau)Z_{r,\frac{s+1}{2}}(2\tau).\]
We believe that these new formulas will have interesting applications.

\section{Application to the mean field equation}

\label{app-mfe}

This section is devoted to the proof of Theorems \ref{thm-mfe} and \ref{16pi-24pi}. Recall from (\ref{Mono-1})-(\ref{Mono-31}) that the monodromy group of H$(\mathbf{n}, B, \tau)$ is conjugate to a subgroup of
$SU(2)$, i.e. the monodromy of H$(\mathbf{n}, B, \tau)$ is {\it unitary}, if and only if Case (a) occurs with some $(r,s)\in\mathbb{R}^2\setminus\frac{1}{2}\mathbb{Z}^2$. Applying Theorem \ref{thm-premodular}, this is equivalent to that $\tau$ is a zero of $Z_{r,s}^{\mathbf{n}}(\cdot)$ for some $(r,s)\in\mathbb{R}^2\setminus\frac{1}{2}\mathbb{Z}^2$. Therefore, Theorem \ref{thm-mfe} is a direct corollary of Theorem \ref{thm-premodular} and the following result.

\begin{theorem}\label{thm-3-1}
The mean field equation \begin{equation}
\Delta u+e^{u}=8\pi \sum_{k=0}^{3}n_{k}\delta_{\frac{\omega_{k}}{2}}\quad\text{ on
}\; E_{\tau}, \label{mean3}%
\end{equation} has an even solution if and only if there exists $B\in \mathbb{C}$ such
that the monodromy of  H$(\mathbf{n}, B, \tau)$ is unitary.

Furthermore, the number of even solutions equals to the number of those $B$'s such that the monodromy of H$(\mathbf{n}, B, \tau)$ is unitary.
\end{theorem}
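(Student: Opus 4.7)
The plan is to invoke the classical Liouville representation formula in order to set up a bijection between even solutions of \eqref{mean3} and accessory parameters $B\in\mathbb{C}$ for which H$(\mathbf{n},B,\tau)$ has unitary monodromy, generalizing the approach Wang and the third author used in \cite{LW2} for the Lam\'{e} case. For the forward direction, given a (not necessarily even) solution $u$ of \eqref{mean3}, I would consider $T(z):=u_{zz}-\tfrac12 u_z^{2}$, which is holomorphic on the universal cover of $E_\tau\setminus E_\tau[2]$ as a direct consequence of $\Delta u+e^u=0$. The prescribed singular behaviour $u(z)=4n_k\log|z-\omega_k/2|+O(1)$ near $\omega_k/2$ forces $T(z)=-2n_k(n_k+1)(z-\omega_k/2)^{-2}+O(1)$, so that $-\tfrac12 T$ extends to a meromorphic elliptic function on $E_\tau$ whose only singularities are double poles of the shape $n_k(n_k+1)(z-\omega_k/2)^{-2}+O(1)$; hence $-\tfrac12 T(z)=I_{\mathbf n}(z;B,\tau)$ for a uniquely determined $B$. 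The Schwarzian identity $\{f,z\}=T(z)=-2I_{\mathbf n}(z;B,\tau)$ then shows that any local developing map $f$ realizing $e^u=8|f'|^2/(1+|f|^2)^2$ is the ratio of two linearly independent solutions of H$(\mathbf{n},B,\tau)$; single-valuedness of $u$ forces the projective monodromy of $f$ to lie in $\mathrm{PSU}(2)$, i.e.\ the $\mathrm{SL}(2,\mathbb{C})$-monodromy of H$(\mathbf{n},B,\tau)$ is unitary.

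Conversely, suppose H$(\mathbf{n},B,\tau)$ has unitary monodromy; then we are in Case~(a) with $(r,s)\in\mathbb{R}^2\setminus\tfrac12\mathbb{Z}^2$, and by \eqref{yby} there are two common eigenfunctions $y_{\boldsymbol{a}},y_{-\boldsymbol{a}}$ satisfying $y_{-\boldsymbol{a}}(z)=(-1)^{n_1+n_2+n_3}y_{\boldsymbol{a}}(-z)$ by the structure of \eqref{yby}. Unitarity of $\rho$ provides a positive scalar $\lambda$ for which $f:=\lambda\,y_{\boldsymbol{a}}/y_{-\boldsymbol{a}}$ has monodromy in $\mathrm{PSU}(2)$, so that $u:=\log\bigl(8|f'|^2/(1+|f|^2)^2\bigr)$ becomes a single-valued smooth function on $E_\tau\setminus E_\tau[2]$ solving $\Delta u+e^u=0$. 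The local exponents $-n_k,n_k+1$ of H$(\mathbf{n},B,\tau)$ at $\omega_k/2$ produce exactly the $8\pi n_k\delta_{\omega_k/2}$ source term, while the symmetry $y_{\pm\boldsymbol{a}}(-z)\propto y_{\mp\boldsymbol{a}}(z)$ translates into $f(-z)=c/f(z)$ with $|c|=1$; since $w\mapsto c/w$ is an isometry of the Fubini--Study metric on $\mathbb{CP}^1$, this yields $u(-z)=u(z)$.

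For the counting equality, the two constructions are mutually inverse: $B$ is recovered from $u$ in Step~1, and Step~2 produces exactly one even $u$ per unitary $B$. The main obstacle is confirming the uniqueness of $\lambda$ in Step~2 and ruling out any residual continuous family of even solutions for a fixed $B$. This rigidity comes from the fact that the two diagonal characters in \eqref{61.3512} are distinct (since $(r,s)\notin\tfrac12\mathbb{Z}^2$), pinning down the eigenbasis $\{y_{\boldsymbol{a}},y_{-\boldsymbol{a}}\}$ up to a $(\mathbb{C}^\times)^2$-ambiguity, while the two constraints ``$f$ has $\mathrm{PSU}(2)$-monodromy'' and ``$u(-z)=u(z)$'' cut this two-parameter ambiguity down to an overall $\mathrm{U}(1)$-phase that acts trivially on $u$. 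Careful bookkeeping of these scalings, together with verification that each such even $u$ actually solves the distributional equation \eqref{mean3}, will complete the argument.
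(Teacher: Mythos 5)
Your strategy is the same as the paper's (Liouville developing map, Schwarzian identity, ratio of the two eigenfunctions $y_{\pm\boldsymbol a}$, uniqueness via simultaneous diagonalization of the commuting unitary monodromy matrices), but one step in your forward direction is wrong as stated. You claim that for a \emph{not necessarily even} solution $u$, the local behaviour $u(z)=4n_k\log|z-\tfrac{\omega_k}{2}|+O(1)$ already forces $u_{zz}-\tfrac12u_z^2=-2n_k(n_k+1)(z-\tfrac{\omega_k}{2})^{-2}+O(1)$. It does not: writing $u=4n_k\log|z-\tfrac{\omega_k}{2}|+v$ with $v\in C^{1}$ near $\tfrac{\omega_k}{2}$, the quantity $u_{zz}-\tfrac12 u_z^2$ acquires a simple-pole term proportional to $v_z(\tfrac{\omega_k}{2})$, which has no reason to vanish for a general solution. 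It is precisely the evenness of $u$ --- hence of the elliptic function $u_{zz}-\tfrac12u_z^2$, whose poles sit at $2$-torsion points --- that kills these residues and yields $u_{zz}-\tfrac12u_z^2=-2I_{\mathbf n}(z;B,\tau)$. If your claim were literally true, every solution of \eqref{mean3} would produce a $B$ with unitary monodromy and hence, by your own converse direction, an even solution; that is exactly the open problem stated after Theorem \ref{thm-mfe}. For the theorem at hand this is repairable (only even solutions are needed), but the justification must invoke evenness, not just the $O(1)$ bound.

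Two further points need tightening. In the converse direction you must check that $f=y_{\boldsymbol a}/y_{-\boldsymbol a}$ has no critical points on $E_\tau\setminus E_\tau[2]$ as a map to $\mathbb{CP}^1$ (otherwise $u$ would blow up there): this follows because $\{f;z\}=-2I_{\mathbf n}$ has only double poles at $E_\tau[2]$, while an interior critical point would force an extra double pole of the Schwarzian. Also, with the eigenbasis normalized as in \eqref{61.3512} no scalar $\lambda$ is needed, and in any case $f(-z)=\lambda^2/f(z)$ forces $|\lambda|=1$, so a "positive scalar" must be $\lambda=1$. Finally, for the counting statement the essential content is that \emph{every} even solution with parameter $B$ admits a developing map which is a ratio of eigenfunctions; this is obtained by simultaneously diagonalizing the commuting $SU(2)$ monodromy matrices by a matrix $P\in SU(2)$, the point being that conjugation by a unitary $P$ produces another developing map of the \emph{same} $u$. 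Your reduction of the $(\mathbb{C}^\times)^2$ ambiguity to a $U(1)$ phase is the right idea, but it should be anchored to this unitary-conjugation step, which is where the actual work lies.
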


For the Lam\'{e} case $n_1=n_2=n_3=0$, Theorem \ref{thm-3-1} was proved in \cite{CLW}. For general case $n_k\in\mathbb{Z}_{\geq 0}$ as considered here, the necessary part of the first assertion in Theorem \ref{thm-3-1} was already proved in \cite{CL-AJM,EG-2015}. Here we sketch the proof from \cite{CL-AJM} for later usage.

\begin{proof}[Proof of the necessary part of the first assertion in Theorem \ref{thm-3-1} \cite{CL-AJM}] Let $u(z)$ be a solution of (\ref{mean3}). Then
the Liouville theorem says that there
is a local meromorphic function $f(z)$ away from $E_{\tau}[2]=\{\frac{\omega_{k}}{2}\,|\,0\leq k\leq 3\}+\Lambda_{\tau}$ such that%
\begin{equation}
u(z)=\log \frac{8|f^{\prime}(z)|^{2}}{(1+|f(z)|^{2})^{2}}. \label{502}%
\end{equation}
This $f(z)$ is called a developing map. By differentiating (\ref{502}), we
have
\begin{equation}
u_{zz}-\frac{1}{2}u_{z}^{2}= \{ f;z \}:=\left(  \frac{f^{\prime \prime}%
}{f^{\prime}}\right)  ^{\prime}-\frac{1}{2}\left(  \frac{f^{\prime \prime}%
}{f^{\prime}}\right)  ^{2}. \label{new22}%
\end{equation}
Conventionally, the RHS of this identity is called the Schwarzian derivative
of $f(z)$, denoted by $\{ f;z \}$. Note that outside
the singularities $E_{\tau}[2]$,
\[
\left(  u_{zz}-\tfrac{1}{2}u_{z}^{2}\right)  _{\bar{z}}=\left(  u_{z\bar{z}%
}\right)  _{z}-u_{z}u_{z\bar{z}}=-\tfrac{1}{4}\left(  e^{u}\right)
_{z}+\tfrac{1}{4}e^{u}u_{z}=0.
\]
Combining this with the local behavior of $u(z)$ at $\frac{\omega_k}{2}$: $u(z)=4n_k\ln|z-\frac{\omega_k}{2}|+O(1)$,
we conclude that $u_{zz}-\frac{1}{2}u_{z}^{2}$ is an \emph{elliptic function} with at most \emph{double poles} at $E_\tau[2]$.

Now suppose $u(z)$ is even, i.e. $u(z)=u(-z)$. Then
\begin{equation}
u_{zz}-\frac{1}{2}u_{z}^{2}=-2\bigg[  \sum_{k=0}^{3}n_{k}(n_{k}+1)\wp
(z+\tfrac{\omega_{k}}{2}|\tau)+B\bigg]  =-2I_{\mathbf{n}}(z;B,\tau) \label{cc-1}%
\end{equation}
for some constant $B=B(u)$, because due to the evenness, $u_{zz}-\frac{1}%
{2}u_{z}^{2}$ has no residues at $z\in E_{\tau}[2]$.
Since $\{f;z\}=-2I_{\mathbf{n}}(z;B,\tau)$, a classical result says that there are linearly independent solutions $y_1(z), y_2(z)$ of H$(\mathbf{n}, B, \tau)$ such that
\begin{equation}\label{fy}f(z)=\frac{y_1(z)}{y_2(z)}.\end{equation}
Recalling Section 2 that $y_1(z), y_2(z)$ are both meromorphic, we see that the developing map
$f(z)$ is single-valued near each $\frac{\omega_{k}}{2}$ and
then can be extended to be an entire meromorphic function in $\mathbb{C}$.

Define the Wronskian
\[W:=y_{1}'(z)y_2(z)-y_1(z)y_2'(z).\]
Then $W$ is a nonzero constant. By inserting (\ref{fy}) into (\ref{502}), a direct computation leads to
\[2\sqrt{2}W e^{-\frac{1}{2}u(z)}=|y_1(z)|^2+|y_2(z)|^2.\]
Since $u(z)$ is single-valued and doubly periodic, we immediately see that the monodromy group with respect to $(y_1(z), y_2(z))$ is contained in $SU(2)$, namely the monodromy of H$(\mathbf{n}, B, \tau)$ is unitary.\end{proof}

The main task of this section is to prove the sufficient part and the $1-1$ correspondence between even solutions and those $B$'s such that the monodromy of H$(\mathbf{n}, B, \tau)$ is unitary. The following proof is also not difficult by applying the monodromy theory of H$(\mathbf{n}, B, \tau)$ in Section \ref{Ex-premodular}.

\begin{proof}[Proof of Theorem \ref{thm-3-1}]
First we prove the sufficient part of the first assertion.

Suppose there is $B\in \mathbb{C}$ such
that the monodromy of  H$(\mathbf{n}, B, \tau)$ is unitary. Then by (\ref{Mono-1}) and (\ref{a-0a})-(\ref{61.3512}), there exist $(r,s)\in\mathbb{R}^2\setminus\frac{1}{2}\mathbb{Z}^2$ and $\boldsymbol{a}\in Y_{\mathbf{n}}(\tau)$ such that
\[y_{\pm\boldsymbol{a}}(z)=e^{c(\pm{\boldsymbol{a}})z}\frac{\prod_{i=1}^{N}\sigma(z\mp a_i)}{\prod_{k=0}^3\sigma(z-\frac{\omega_k}{2})^{n_k}}\] are linearly independent solutions of  H$(\mathbf{n}, B, \tau)$ and
\begin{equation}
\begin{pmatrix}
y_{\boldsymbol{a}}(z+\omega_1)\\
y_{-\boldsymbol{a}}(z+\omega_1)
\end{pmatrix}
=\left(
\begin{matrix}
e^{-2\pi is} & 0\\
0 & e^{2\pi is}%
\end{matrix}
\right)
\begin{pmatrix}
y_{\boldsymbol{a}}(z)\\
y_{-\boldsymbol{a}}(z)
\end{pmatrix}
, \label{61-34}%
\end{equation}%
\[
\begin{pmatrix}
y_{\boldsymbol{a}}(z+\omega_2)\\
y_{-\boldsymbol{a}}(z+\omega_2)
\end{pmatrix}
=\left(
\begin{matrix}
e^{2\pi ir} & 0\\
0 & e^{-2\pi ir}%
\end{matrix}
\right)
\begin{pmatrix}
y_{\boldsymbol{a}}(z)\\
y_{-\boldsymbol{a}}(z)
\end{pmatrix}
.
\]
Now we define
\begin{equation}\label{deve-f}f(z):=\frac{y_{\boldsymbol{a}}(z)}{y_{-\boldsymbol{a}}(z)}=e^{2z\sum_{i=1}^{N} \zeta(a_i)}\frac{\prod_{i=1}^{N}\sigma(z- a_i)}{\prod_{i=1}^{N}\sigma(z+a_i)}\end{equation}
and
\[u(z):=\log \frac{8|f^{\prime}(z)|^{2}}{(1+|f(z)|^{2})^{2}}.\]
We claim that this $u(z)$ is an even solution of (\ref{mean3}).

Clearly (\ref{61-34}) and $(r,s)\in\mathbb{R}^2$ yield that $u(z)$ is doubly periodic and hence well-defined on $E_{\tau}$. Furthermore, $f(-z)=\frac{1}{f(z)}$ infers that $u(z)=u(-z)$.

Since (\ref{a-0a}), (\ref{set11}) and (\ref{set111}) imply that $a_i\notin E_{\tau}[2]$ and $a_i$'s (resp. $-a_i$'s) are all simple zeros (resp. simple poles) of $f(z)$, we have that (i) $f(z)$ are holomorphic at $\frac{\omega_k}{2}$ for all $k$, and (ii) $u(\pm a_i)\neq \infty$ for all $i$ and so
\[\Delta u+e^u=0\quad \text{on}\; E_{\tau}\setminus\{z\in E_{\tau}\setminus \cup_i\{\pm a_i\}\,|\, f'(z)=0\}.\]
Since $f(z)=\frac{y_{\boldsymbol{a}}(z)}{y_{-\boldsymbol{a}}(z)}$ gives
\begin{align*}\left(  \frac{f^{\prime \prime}%
}{f^{\prime}}\right)  ^{\prime}-\frac{1}{2}\left(  \frac{f^{\prime \prime}%
}{f^{\prime}}\right)  ^{2}&=-2I_{\mathbf{n}}(z;B,\tau)\\
&=-2\bigg[  \sum_{k=0}^{3}n_{k}(n_{k}+1)\wp
(z+\tfrac{\omega_{k}}{2}|\tau)+B\bigg],\end{align*}
we easily conclude that $f'(z)\neq 0$ for any $z\notin E_{\tau}[2]$ and ord$_{z=\frac{\omega_k}{2}}f'(z)=2n_k$, i.e. $u(z)=4n_k\ln|z-\frac{\omega_k}{2}|+O(1)$ near $\frac{\omega_k}{2}$.
In conclusion, $u(z)$ is an even solution of (\ref{mean3}).

Next we prove that the even solution is unique for this given $B$. Suppose $\tilde{u}(z)$ is an even solution of (\ref{mean3}) corresponding to the same $B$ with $u(z)$, i.e. (\ref{cc-1}) holds. Our goal is to prove $\tilde{u}(z)=u(z)$.

By the proof of the necessary part of Theorem \ref{thm-3-1}, we may let $\hat{f}(z)=\frac{y_1(z)}{y_2(z)}$ be a developing map of $\tilde{u}(z)$ such that $y_1(z), y_2(z)$ are linearly independent solutions of H$(\mathbf{n}, B, \tau)$ and satisfy
\[\begin{pmatrix}
y_{1}(z+\omega_j)\\
y_{2}(z+\omega_j)
\end{pmatrix}
=M_j
\begin{pmatrix}
y_{1}(z)\\
y_{2}(z)
\end{pmatrix}
,\quad M_j\in SU(2),\quad j=1,2.\]
Since $M_1M_2=M_2M_1$, there is a matrix $P=\begin{pmatrix}a&b\\c&d\end{pmatrix}\in SU(2)$ such that
$\tilde{M}_j:=PM_jP^{-1}$ are both diagonal matrices for $j=1,2$.
Define
\[\begin{pmatrix}
\tilde{y}_{1}(z)\\
\tilde{y}_{2}(z)
\end{pmatrix}
:=P
\begin{pmatrix}
y_{1}(z)\\
y_{2}(z)
\end{pmatrix}\]
to be another pair of linearly independent solutions of H$(\mathbf{n}, B, \tau)$ and
\[\tilde{f}(z):=\frac{\tilde{y}_{1}(z)}{\tilde{y}_{2}(z)
}=\frac{a\hat{f}(z)+b}{c\hat{f}(z)+d}=:P\hat{f}(z).\]
The fact $P\in SU(2)$ implies that $\tilde{f}(z)$ is also a developing map of $\tilde{u}(z)$, i.e.
\begin{equation}\label{u-f-tilde}\tilde{u}(z)=\log \frac{8|\hat{f}^{\prime}(z)|^{2}}{(1+|\hat{f}(z)|^{2})^{2}}=\log \frac{8|\tilde{f}^{\prime}(z)|^{2}}{(1+|\tilde{f}(z)|^{2})^{2}}.\end{equation}
Clearly
\[\begin{pmatrix}
\tilde{y}_{1}(z+\omega_j)\\
\tilde{y}_{2}(z+\omega_j)
\end{pmatrix}
=\tilde{M}_j
\begin{pmatrix}
\tilde{y}_{1}(z)\\
\tilde{y}_{2}(z)
\end{pmatrix},\quad \tilde{M}_j\text{ is diagonal for } j=1,2.\]
Together with (\ref{61-34}), $(r,s)\notin \frac{1}{2}\mathbb{Z}^2$ and the fact that $\tilde{y}_{j}(z)$'s are linear combinations of $y_{\pm\boldsymbol{a}}(z)$, we easily conclude that, by reordering $\tilde{y}_{1}(z),
\tilde{y}_{2}(z)$ if necessary,
\[\tilde{y}_{1}(z)=c_1y_{\boldsymbol{a}}(z),\quad \tilde{y}_{2}(z)=c_2y_{-\boldsymbol{a}}(z)\]
for some constants $c_1, c_2\neq0$. So (\ref{deve-f}) gives $\tilde{f}(z)=cf(z)$ for $c=c_1/c_2\neq 0$ and then
\[\tilde{f}(-z)=cf(-z)=\frac{c}{f(z)}=\frac{c^2}{\tilde{f}(z)}.\]
Inserting this into (\ref{u-f-tilde}), it follows from $\tilde{u}(z)=\tilde{u}(-z)$ that $|c|=1$. In conclusion, $\tilde{f}(z)=cf(z)$ with $|c|=1$, which clearly infers that $\tilde{u}(z)=u(z)$. This proves the uniqueness of the even solution with respect to the given $B$. Therefore, the number of even solutions equals to the number of those $B$'s such that the monodromy of H$(\mathbf{n}, B, \tau)$ is unitary.

The proof is complete.
\end{proof}

As an application of Theorem \ref{thm-3-1}, we have the following result, which contains Theorem \ref{16pi-24pi} as a special case.

\begin{theorem}\label{thm-npi}
Given $n\in\mathbb{N}$, we define
\[n_0:=\frac{n}{2}-1,\;n_1=n_2=n_3:=\frac{n}{2}\quad\text{if $n$ is even},\]
\[n_0:=\frac{n+1}{2},\;n_1=n_2=n_3:=\frac{n-1}{2}\quad\text{if $n$ is odd}.\]
Then the mean field equations
\begin{equation}\label{mfe-8npi}\Delta u+e^u=8\pi n\delta_{0} \  \  \text{on}\ E_{\tau}\end{equation}
and
\begin{equation}\label{mfe-8npi1}\Delta u+e^u=8\pi\sum_{k=0}^3 n_k\delta_{\frac{\omega_k}{2}} \  \  \text{on}\ E_{\tau}\end{equation}
has the same number of even solutions.
\end{theorem}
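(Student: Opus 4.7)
The plan is to combine Theorem~\ref{thm-3-1} (converting the even-solution count into a count of $B$'s with unitary monodromy) with Theorem~\ref{thm-premodular}(b) (characterizing unitary $B$'s as zeros of the pre-modular form), thereby reducing the theorem to a matching of zero sets, and then to construct this matching via the doubling isogeny $[2]: E_\tau \to E_\tau$. Applying Theorem~\ref{thm-3-1} to each MFE, the number of even solutions of $\Delta u + e^u = 8\pi n\delta_0$ equals $\#\{B : H(\mathbf{n}^{(1)}, B, \tau) \text{ has unitary monodromy}\}$ with $\mathbf{n}^{(1)} := (n, 0, 0, 0)$, and similarly the second MFE gives a count in terms of $\mathbf{n}^{(2)}$. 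By Theorem~\ref{thm-premodular}(b), these $B$'s correspond bijectively to equivalence classes $\{(r, s), (-r, -s)\}$ in $(\mathbb{R}^2 \setminus \tfrac{1}{2}\mathbb{Z}^2)/\mathbb{Z}^2$ with $Z_{r,s}^{\mathbf{n}}(\tau) = 0$ and $r + s\tau \notin \Lambda_\tau$, the $\pm$ quotient accounting for the fact that $\boldsymbol{a}$ and $-\boldsymbol{a}$ give the same $B$ (see Section~\ref{Ex-premodular}). Thus the theorem is equivalent to matching these zero sets for fixed $\tau$.

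The decisive numerical input is $\sum_{k=0}^{3} n_k(n_k+1) = n(n+1)$ for both $\mathbf{n}^{(1)}$ and $\mathbf{n}^{(2)}$: for $n = 2m$, $(m-1)m + 3m(m+1) = 4m^2 + 2m$; for $n = 2m+1$, $(m+1)(m+2) + 3m(m+1) = (m+1)(4m+2)$; both equal $n(n+1)$. Consequently, by Theorem~\ref{thm-premodular}(a) both pre-modular forms share the weight $n(n+1)/2$, and by Theorem~\ref{thm-degree} the addition maps $\sigma_{\mathbf{n}}: \overline{X_{\mathbf{n}}(\tau)} \to E_\tau$ have the common degree $n(n+1)/2$. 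Hence $W_{\mathbf{n}^{(1)}}(\mathbf{z})$ and $W_{\mathbf{n}^{(2)}}(\mathbf{z})$ are monic polynomials in $\mathbf{z}$ of the same degree and the same weighted total degree, with coefficients in the common ring $\mathbb{Q}[e_1, e_2, e_3, \wp(\sigma|\tau), \wp'(\sigma|\tau)]$.

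To produce the bijection of zero sets, I would exploit the doubling isogeny $[2]: E_\tau \to E_\tau$, $w \mapsto 2w$, together with the identity
\[
\wp(2w|\tau) = \tfrac{1}{4}\sum_{k=0}^{3}\wp(w + \tfrac{\omega_k}{2}|\tau),
\]
verified by matching the double poles at $E_\tau[2]$ and invoking $e_1 + e_2 + e_3 = 0$. This implies that $[2]^{*} H((n, 0, 0, 0), B, \tau) = H((n, n, n, n), 4B, \tau)$, so the map $y \mapsto y(2w)$ is a linear isomorphism on the $2$-dimensional solution spaces and induces the monodromy-doubling map $(r, s) \mapsto (2r \bmod 1, 2s \bmod 1)$ on unitary data. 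This completes one half of the correspondence: unitary $B$'s for $H(\mathbf{n}^{(1)}, \cdot, \tau)$ embed into unitary $B'$'s for $H((n, n, n, n), \cdot, \tau)$.

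The main obstacle is the analogous step for $\mathbf{n}^{(2)}$: a direct pullback of $H(\mathbf{n}^{(2)}, B, \tau)$ under $[2]$ has poles at $4$-torsion (not $2$-torsion) points, so it is not a Heun equation on $E_\tau$. Bridging this gap will require combining the pullback with a Darboux-type transformation that incorporates a $\sigma$-function dressing factor $\phi(w) = \sigma(w)^{m_0}\prod_{k=1}^{3}\sigma(w - \tfrac{\omega_k}{2})^{m_k}$, with exponents tuned so that the local exponent of the transformed solution at each $\omega_k/2$ matches that of $H(\mathbf{n}^{(2)}, B', \tau)$, namely $(m_0, m_1, m_2, m_3) = (m+1, m, m, m)$ for $n = 2m$ and $(m, m+1, m+1, m+1)$ for $n = 2m+1$; a pure multiplicative dressing is insufficient because it introduces an unwanted first-order term, so a compensating derivative operation is needed to restore the form $y'' = Iy$. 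Verifying that the combined transformation actually yields a solution of $H(\mathbf{n}^{(2)}, B', \tau)$ for an algebraically determined $B'$, and that the induced map on unitary monodromy data is bijective (preserving reality of $(r, s) \in \mathbb{R}^2 \setminus \tfrac{1}{2}\mathbb{Z}^2$), is the technical heart of the argument; it rests on the explicit defining equations \eqref{I-050}--\eqref{I-051} of $X_{\mathbf{n}}(\tau)$ (Theorem~\ref{thm4-111}) together with the unitary monodromy formula \eqref{rs2} and the common-eigenfunction description $y_{\boldsymbol{a}}(z)$ from Section~\ref{Ex-premodular}.
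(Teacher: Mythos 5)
Your first step---reducing both counts via Theorem~\ref{thm-3-1} to counting the values of $B$ for which the monodromy is unitary---is exactly how the paper proceeds. But from that point on the paper's proof is a one-line citation: by Takemura's generalized Darboux transformations, H$((n,0,0,0),B,\tau)$ and H$((n_0,n_1,n_2,n_3),B,\tau)$ are \emph{isomonodromic for the same $B$ and $\tau$}, so the two sets of unitary $B$'s literally coincide and the counts are equal. Your proposal never establishes this (or any substitute) correspondence: the passage from H$(\mathbf{n}^{(2)},B,\tau)$ to a Lam\'{e}-type equation is explicitly deferred as ``the technical heart of the argument,'' with only a heuristic description of a dressing factor and a compensating derivative. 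Since that correspondence \emph{is} the entire content of the theorem once Theorem~\ref{thm-3-1} is in place, the proposal has a genuine gap rather than a complete alternative proof.

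Beyond the missing step, the isogeny route you sketch has its own pitfalls. The computation $[2]^{*}\mathrm{H}((n,0,0,0),B,\tau)=\mathrm{H}((n,n,n,n),4B,\tau)$ and the identity $\wp(2w)=\tfrac14\sum_k\wp(w+\tfrac{\omega_k}{2})$ are correct, but the induced map on monodromy data, $(r,s)\mapsto(2r,2s)$ on $\mathbb{R}^2/\mathbb{Z}^2$, is $4$-to-$1$ and sends points of $\tfrac14\mathbb{Z}^2\setminus\tfrac12\mathbb{Z}^2$ into $\tfrac12\mathbb{Z}^2$, i.e.\ it can degenerate completely reducible unitary data into the excluded boundary cases \eqref{notcompleteC}; so even a completed version of this construction would not obviously yield a bijection of unitary $B$'s, and surjectivity onto the unitary $B'$'s of the target equation is asserted rather than argued. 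The detour through $Z^{\mathbf{n}}_{r,s}$ and the equality of weights $\sum n_k(n_k+1)=n(n+1)$ is consistent with (and suggestive of) the result, but it is not needed once isomonodromy for equal $B$ is known, and it cannot replace it.
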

\begin{proof}
It was proved by Takemura \cite[Section 4]{Takemura5} that H$((n,0,0,0), B, \tau)$ and H$ ((n_0,n_1,n_2,n_3), B, \tau)$ are isomonodromic (i.e. their monodromy representations are the same) for any $(B,\tau)$. Therefore, this theorem follows from Theorem \ref{thm-3-1}.
\end{proof}

In Theorem \ref{thm-npi}, the case $n=1$ is trivial, and the first nontrivial case $n=2$ gives Theorem \ref{16pi-24pi}, which has the following consequence for rhombus tori.

\begin{corollary}\label{corollary-1}
There exists $b^{\ast}\in(\frac{\sqrt{3}}{2},\frac{6}
{5})$ such that for $\tau=\frac{1}{2}+ib$ with $b>b^*$, (\ref{mfe-24pi}) on $E_{\tau}$ has even solutions; while for $\tau=\frac{1}{2}+i\frac{\sqrt{3}}{2}$, (\ref{mfe-24pi}) on $E_{\tau}$ has no even solutions.
\end{corollary}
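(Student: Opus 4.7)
The plan is to translate the question via the two main theorems of the paper into a question about zeros of an explicit polynomial. By Theorem~\ref{16pi-24pi}, (\ref{mfe-24pi}) has the same number of even solutions as (\ref{mfe-16pi}), the $n=2$ Lam\'e case. By Theorem~\ref{thm-mfe} and the explicit formula $Z_{r,s}^{(2,0,0,0)}(\tau)=Z^{3}-3\wp Z-\wp^{\prime}$ from the end of Section~\ref{Ex-premodular} (with $Z=Z_{r,s}(\tau)$ and $\wp,\wp^{\prime}$ evaluated at $r+s\tau$), the existence of an even solution on $E_{\tau}$ becomes equivalent to the existence of $(r,s)\in\mathbb{R}^{2}\setminus\frac{1}{2}\mathbb{Z}^{2}$ satisfying $Z^{3}-3\wp Z-\wp^{\prime}=0$ at $\tau$. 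Thus the corollary is reduced to a purely elliptic/modular-function statement.

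For the non-existence at $\tau_{0}=\frac{1}{2}+i\frac{\sqrt{3}}{2}$, I would exploit that $\Lambda_{\tau_{0}}$ is the equianharmonic lattice: $g_{2}(\tau_{0})=0$ and $\Lambda_{\tau_{0}}$ admits multiplication by $e^{i\pi/3}$, giving a full $\mathbb{Z}/6$ automorphism group; combined with the complex-conjugation symmetry $z\mapsto\bar z$ of the rhombus lattice, the real zeros of $Z^{3}-3\wp Z-\wp^{\prime}$ as a function of $(r,s)$ are constrained to a finite union of one-dimensional real slices of the fundamental square. On each such slice, using $g_{2}(\tau_{0})=0$ and the closed-form value of $\eta_{k}(\tau_{0})$, the cubic reduces to an elementary function of a single real parameter whose non-vanishing outside $\frac{1}{2}\mathbb{Z}^{2}$ can be verified directly, ruling out every candidate $(r,s)$ and hence any even solution at $\tau_{0}$.

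For the existence part, I would pick a test value $\tau_{1}=\frac{1}{2}+ib_{1}$ with $b_{1}<\frac{6}{5}$ (but close to $\frac{6}{5}$) and, using the reality of $Z_{r,s}^{(2,0,0,0)}(\tau_{1})$ along the slices preserved by $z\mapsto\bar z$, exhibit a sign change of the pre-modular form along such a slice; the intermediate value theorem then produces a real zero outside $\frac{1}{2}\mathbb{Z}^{2}$, and hence an even solution via Theorem~\ref{thm-mfe}. Combining this with openness of the solution locus in $\tau$ (via the implicit function theorem applied to $Z_{r,s}^{(2,0,0,0)}=0$) and the uniform-on-compacta estimate at $\tau_{0}$ from step~2 (which forces non-solvability to persist in a right neighborhood of $\frac{\sqrt 3}{2}$), one defines $b^{\ast}$ as the infimum of $b$'s admitting even solutions and concludes $b^{\ast}\in(\frac{\sqrt 3}{2},\frac{6}{5})$.

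The principal obstacle is the slice-by-slice analysis at $\tau_{0}$: even with the strong $\mathbb{Z}/6$ symmetry, one must carefully enumerate the orbits of real $(r,s)$ under the combined symmetry group and check that any zero of the cubic along each orbit actually lies in the excluded half-integer locus, since the pre-modular form does vanish identically on $\frac{1}{2}\mathbb{Z}^{2}$ in the degenerate sense and spurious zeros must be distinguished from genuine monodromy data. A secondary, much milder, obstacle is producing the explicit sign change at $\tau_{1}$, which in practice may require a short numerical verification or an asymptotic expansion as $b\to\infty$ combined with monotonicity of the relevant real slice.
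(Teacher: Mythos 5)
Your opening move --- using Theorem \ref{16pi-24pi} to reduce (\ref{mfe-24pi}) to the $16\pi$ equation (\ref{mfe-16pi}) --- is exactly what the paper does. But from that point the paper's proof is a two-line citation: the existence of $b^{\ast}\in(\frac{\sqrt{3}}{2},\frac{6}{5})$ with even solutions of (\ref{mfe-16pi}) for all $b>b^{\ast}$ is \cite[Theorem A.2]{CL-JDG}, and the nonexistence at $\tau=\frac12+i\frac{\sqrt3}{2}$ is \cite[Theorem 3.1]{CKL-CAG}. You instead propose to reprove both of these facts from scratch via the cubic $Z^{3}-3\wp Z-\wp'$, and that part of your argument is a plan rather than a proof. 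The nonvanishing of the cubic for all real $(r,s)\notin\frac12\mathbb{Z}^{2}$ at the equianharmonic point is precisely the hard analytic content of the cited theorem; your appeal to the $\mathbb{Z}/6$ symmetry does not actually confine the real zero set to finitely many one-dimensional slices (the zero locus of a real-analytic function of two real variables is generically a curve, and the symmetry only partitions it into orbits), so the ``slice-by-slice'' verification you defer is not set up correctly, let alone carried out.

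There is a second, independent gap in the existence half. The statement asserts even solutions for \emph{every} $b>b^{\ast}$, not merely for $b$ in a neighborhood of one test value. An intermediate-value argument at a single $b_{1}<\frac{6}{5}$ plus openness of the solution locus gives solvability on an open set of $b$'s, but defining $b^{\ast}$ as the infimum of that set does not show the set contains the full ray $(b^{\ast},\infty)$; one needs a monotonicity or connectedness argument in $b$ (this is part of what \cite[Theorem A.2]{CL-JDG} establishes). Similarly, the quantitative bound $b^{\ast}<\frac{6}{5}$ requires an explicit verification that you only gesture at. If you are permitted to quote the two prior results, the corollary follows immediately from Theorem \ref{16pi-24pi}; if not, the bulk of the work remains to be done.
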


\begin{proof} It was proved in \cite[Theorem A.2]{CL-JDG} that there exists $b^{\ast}\in(\frac{\sqrt{3}}{2},\frac{6}
{5})$ such that for $\tau=\frac{1}{2}+ib$ with $b>b^*$, (\ref{mfe-16pi}) on $E_{\tau}$ has even solutions. Furthermore, we proved in \cite[Theorem 3.1]{CKL-CAG} that (\ref{mfe-16pi}) on $E_{\tau}$ has no solutions for $\tau=\frac{1}{2}+i\frac{\sqrt{3}}{2}$. Therefore, this assertion follows from Theorem \ref{16pi-24pi}.
\end{proof}

\begin{remark}In Theorem \ref{thm-npi}, it is easy to see that
$n(n+1)=\sum_{k=0}^3 n_k(n_k+1)$, namely the pre-modular forms $Z_{r,s}^{(n,0,0,0)}(\tau)$ and $Z_{r,s}^{(n_0,n_1,n_2,n_3)}(\tau)$ have the same weight. Theorem \ref{thm-npi} strongly suggests $Z_{r,s}^{(n,0,0,0)}(\tau)=Z_{r,s}^{(n_0,n_1,n_2,n_3)}(\tau)$, which will be studied in a future work.
\end{remark}

\begin{remark}
Theorem \ref{thm-npi} and Corollary \ref{corollary-1} indicate that the mean field equations with multiple singularities
\begin{equation}\label{mfe-multiple}\Delta u+e^u=8\pi\sum n_k\delta_{p_k} \  \  \text{on}\ E_{\tau}\end{equation}
might be studied by establishing relations with some other mean field equations with less singularities. We will apply this idea to study (\ref{mfe-multiple}) in future works.
\end{remark}

\appendix

\section{Proof of Theorem \ref{thm4-111}}

\label{Floquet-theory}

This appendix is devoted to the proof of Theorem \ref{thm4-111}. First we need
the following result which was essentially proved in \cite{GW1,Takemura1}.

\begin{theorem}
\label{thm4.1--1}Suppose $\boldsymbol{a}=\{a_{1},\cdot \cdot \cdot
,a_{N}\}$ satisfies%
\begin{equation}
[a_{i}]  \not \in E_{\tau}[2],\text{ }[  a_{i}]
\not =[  a_{j}]  ,\text{ }\forall i\not =j. \label{I-62}%
\end{equation}
Then \[y_{\boldsymbol{a}}(z)=e^{c({\boldsymbol{a}})z}\frac{\prod_{i=1}^{N}\sigma(z- a_i)}{\prod_{k=0}^3\sigma(z-\frac{\omega_k}{2})^{n_k}}\] is a solution to H$(\mathbf{n},B,\tau)$ with some $B$
if and only if $\boldsymbol{a}$ satisfies
{\allowdisplaybreaks
\begin{align}
&  \sum_{k=1}^{3}n_{k}\left[  \zeta(a_{i}+\tfrac{\omega_{k}}{2})+\zeta
(a_{i}-\tfrac{\omega_{k}}{2})-2\zeta(a_{i})\right] \label{I-63}\\
=  &  2\sum_{j\not =i}^{N}\left[  \zeta(a_{i}-a_{j})+\zeta(a_{j}%
)-\zeta(a_{i})\right]  ,\text{ \ }1\leq i\leq N,\nonumber
\end{align}
}{\allowdisplaybreaks%
\begin{align}
&  \sum_{i=1}^{N}\left(  \zeta(a_{i}+\tfrac{\omega_{l}}{2})+\zeta
(a_{i}-\tfrac{\omega_{l}}{2})\right) \label{I-64}\\
&  =2\sum_{i=1}^{N}\zeta(a_{i}),\text{ whenever }n_{l}\not =%
0,l\in \{1,2,3\},\nonumber
\end{align}
}and $c(\boldsymbol{a})$, $B$ are determined by
\begin{equation}
c(\boldsymbol{a})=\sum_{i=1}^{N}\zeta
(a_{i})-\frac{1}{2}\sum_{k=1}^{3}n_{k}\eta_{k}, \label{I-65}%
\end{equation}%
\begin{equation}
B=(2n_{0}-1)\sum_{i=1}^{N}\wp(a_{i})-\sum_{k=1}^{3}n_{k}(n_{k}%
+2n_{0})e_{k}. \label{I-66}%
\end{equation}

\end{theorem}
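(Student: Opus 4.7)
The plan is to substitute $y_{\boldsymbol{a}}$ into H$(\mathbf{n},B,\tau)$ via its logarithmic derivative and reduce the second order ODE to an identity between elliptic functions. Setting $\psi(z):=y_{\boldsymbol{a}}'(z)/y_{\boldsymbol{a}}(z)$, one reads off from (\ref{yby})
\[\psi(z) = c(\boldsymbol{a}) + \sum_{i=1}^{N}\zeta(z-a_i) - \sum_{k=0}^{3} n_k \zeta\bigl(z-\tfrac{\omega_k}{2}\bigr),\]
and $y_{\boldsymbol{a}}$ solves H$(\mathbf{n},B,\tau)$ if and only if $F(z):=\psi(z)^2+\psi'(z) - I_{\mathbf{n}}(z;B,\tau)$ vanishes identically. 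Since $\zeta(z+\omega_j)=\zeta(z)+\eta_j$ and $\sum_i 1 = N = \sum_k n_k$, the function $\psi$ is doubly periodic, so $F$ is an elliptic function on $E_\tau$ whose only possible poles sit at the $a_i$'s and the $\omega_k/2$'s.

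I would then do a local analysis of $F$ at each candidate pole. At $z=a_i$, the double pole $1/(z-a_i)^2$ coming from $\zeta(z-a_i)^2$ in $\psi^2$ cancels exactly the $-1/(z-a_i)^2$ from $-\wp(z-a_i)$ in $\psi'$, so only a residue condition remains:
\[ c(\boldsymbol{a})+\sum_{j\neq i}\zeta(a_i-a_j)-\sum_{k=0}^{3}n_k\zeta\bigl(a_i-\tfrac{\omega_k}{2}\bigr)=0,\qquad 1\le i\le N. \]
At $z=\omega_k/2$ with $n_k\ge 1$, the total double pole of $\psi^2+\psi'$ is $n_k(n_k+1)/(z-\omega_k/2)^2$, matching precisely the double pole of $I_{\mathbf{n}}$, and the simple pole vanishes iff
\[ c(\boldsymbol{a})-\sum_{i=1}^{N}\zeta\bigl(a_i-\tfrac{\omega_k}{2}\bigr)-\sum_{l\neq k}n_l\zeta\bigl(\tfrac{\omega_k}{2}-\tfrac{\omega_l}{2}\bigr)=0. \]

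Next I would translate these raw residue conditions into the stated forms (\ref{I-63})--(\ref{I-65}). The residue condition at $\omega_0/2=0$ (which is available since $n_0\ge 1$) together with $\zeta(-\omega_l/2)=-\eta_l/2$ immediately gives (\ref{I-65}) as a formula for $c(\boldsymbol{a})$. For each $k\in\{1,2,3\}$ with $n_k\ne 0$, substituting (\ref{I-65}) into the residue condition at $\omega_k/2$ and invoking the identity $\zeta(\omega_k/2-\omega_l/2)=(\eta_k-\eta_l)/2$ (a one line check via quasi-periodicity and $\omega_3=\omega_1+\omega_2$), the equation collapses exactly to (\ref{I-64}). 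Plugging (\ref{I-65}) into the residue condition at $a_i$ and symmetrizing the $\omega_k/2$ terms through the relation $2\zeta(a_i-\omega_k/2)=\zeta(a_i+\omega_k/2)+\zeta(a_i-\omega_k/2)-\eta_k$ yields precisely (\ref{I-63}). This gives both directions of the iff and simultaneously pins down $c(\boldsymbol{a})$.

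Finally, once all residue conditions hold, $F$ is a pole-free elliptic function, hence a constant, and $B$ is chosen to kill that constant. To derive the explicit (\ref{I-66}) I would Taylor expand at $z=0$: writing $\psi(z)=-n_0/z + Pz + O(z^2)$ with $P=-\sum_i\wp(a_i)+\sum_{k=1}^{3}n_k e_k$ (using $\wp'(\omega_k/2)=0$), a direct expansion gives $\psi^2+\psi' = n_0(n_0+1)/z^2 + (1-2n_0)P + O(z)$, while $I_{\mathbf{n}}(z;B,\tau) = n_0(n_0+1)/z^2 + [\,B+\sum_{k=1}^{3}n_k(n_k+1)e_k\,] + O(z^2)$, and matching constants produces (\ref{I-66}). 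The main obstacle is the bookkeeping in reducing the raw residue conditions to the symmetric forms (\ref{I-63})--(\ref{I-64}); notably, higher order matchings such as the $z$-coefficient $2(1-n_0)\cdot\tfrac12\sum_i\wp'(a_i)$ are not manifest term by term and must be handled by invoking the structural fact that a pole-free elliptic $F$ is automatically constant, so that only the leading Taylor coefficient at $z=0$ needs to be matched by adjusting $B$.
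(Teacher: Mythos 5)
Your proposal is correct and follows essentially the same route as the paper's own (sketched) proof: both form the elliptic function $h=(y'/y)'+(y'/y)^2-I_{\mathbf{n}}(z;B,\tau)$, kill its simple poles at the $a_i$'s and $\tfrac{\omega_k}{2}$'s to obtain (\ref{I-63})--(\ref{I-65}), and match the constant term of the Laurent expansion at $z=0$ to get (\ref{I-66}). Your reductions of the raw residue conditions to the symmetric forms via $\zeta(a_i+\tfrac{\omega_k}{2})=\zeta(a_i-\tfrac{\omega_k}{2})+\eta_k$ and $\zeta(\tfrac{\omega_k-\omega_l}{2})=\tfrac{\eta_k-\eta_l}{2}$ check out, as does the observation that higher-order coefficients need no separate verification since a pole-free elliptic function is constant.
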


\begin{proof}
We sketch the proof here for the reader's convenience.
Note that
\begin{align*}
\frac{y_{\boldsymbol{a}}'(z)}{y_{\boldsymbol{a}}(z)}=c({\boldsymbol{a}})+\sum_{i=1}^N\zeta(z-a_i)
-\sum_{k=0}^3n_k\zeta(z-\tfrac{\omega_k}{2}),
\end{align*}
\begin{align*}
\left(\frac{y_{\boldsymbol{a}}'(z)}{y_{\boldsymbol{a}}(z)}\right)'=-\sum_{i=1}^N\wp(z-a_i)
+\sum_{k=0}^3n_k\wp(z-\tfrac{\omega_k}{2}),
\end{align*}
are both elliptic functions. Consider the elliptic function
\[h(z):=\left(\frac{y_{\boldsymbol{a}}'(z)}{y_{\boldsymbol{a}}(z)}\right)'
+\left(\frac{y_{\boldsymbol{a}}'(z)}{y_{\boldsymbol{a}}(z)}\right)^2-\sum_{k=0}^{3}n_{k}(n_{k}+1)\wp
(z+\tfrac{\omega_{k}}{2})-B.\]
Clearly $y_{\boldsymbol{a}}(z)$ is a solution of H$(\mathbf{n},B,\tau)$ if and only if $h(z)\equiv 0$ if and only if {\it none of $\frac{\omega_k}{2}$'s and $a_i$'s are poles of $h(z)$} and the constant term of the Laurent expansion at $z=0$ is $0$. By computing leading terms of the Laurent expansions at $z=\frac{\omega_k}{2}, a_i$, we easily obtain the conditions (\ref{I-63})-(\ref{I-66}).
\end{proof}

Clearly Theorem \ref{thm4-111} is a consequence of Theorem \ref{thm4.1--1} and the following result.

\begin{theorem}
\label{prop11}Suppose $\boldsymbol{a}=\{a_{1},\cdot \cdot \cdot
,a_{N}\}$ satisfies \begin{equation}
[  a_{i}]  \not \in E_{\tau}[2],\text{ }[a_{i}]
\not =\pm[  a_{j}]  ,\text{ }\forall i\not =j. \label{I-0620}%
\end{equation} Then (\ref{I-63})-(\ref{I-64}) are equivalent to
\begin{equation}
\sum_{i=1}^{N}\wp^{\prime}(a_{i})\wp(a_{i})^{l}=0\text{ for }0\leq l\leq
n_{0}-2, \label{I-0500}%
\end{equation}%
\begin{equation}
\sum_{i=1}^{N}\wp^{\prime}(a_{i})\prod_{j=1,\neq i}^{N}(\wp(a_{j})-e_{k})^{l}=0\text{
for }1\leq l\leq n_{k},\text{ }k\in \{1,2,3\}. \label{I-0510}%
\end{equation}
\end{theorem}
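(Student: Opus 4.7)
The plan is to rewrite both sets of conditions in terms of a single auxiliary polynomial and deduce the equivalence via polynomial divisibility. First, apply the classical identities
\[\zeta(z+\tfrac{\omega_k}{2}) + \zeta(z-\tfrac{\omega_k}{2}) - 2\zeta(z) = \frac{\wp^{\prime}(z)}{\wp(z) - e_k}, \qquad \zeta(a-b) + \zeta(b) - \zeta(a) = \frac{1}{2}\frac{\wp^{\prime}(a) + \wp^{\prime}(b)}{\wp(a) - \wp(b)}\]
to recast (\ref{I-63}) as $\sum_{k=1}^3 n_k \wp^{\prime}(a_i)/(\wp(a_i) - e_k) = \sum_{j \neq i}(\wp^{\prime}(a_i) + \wp^{\prime}(a_j))/(\wp(a_i) - \wp(a_j))$ for each $i$, and (\ref{I-64}) as $\sum_{i=1}^N \wp^{\prime}(a_i)/(\wp(a_i) - e_l) = 0$ whenever $n_l \geq 1$. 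Since (\ref{I-0620}) forces $\prod_j(\wp(a_j) - e_k) \neq 0$, the condition (\ref{I-0510}) is likewise equivalent to $\sum_i \wp^{\prime}(a_i)/(\wp(a_i) - e_k)^l = 0$ for $1 \leq l \leq n_k$, $k \in \{1,2,3\}$.

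Next, define $\mathcal{N}(X) := \sum_i \wp^{\prime}(a_i)\prod_{j \neq i}(X - \wp(a_j))$ of degree $\leq N-1$, $Q(X) := \prod_{k=1}^3 (X - e_k)^{n_k}$ of degree $N - n_0$, and the rational function
\[\mathcal{R}(X) := \frac{\mathcal{N}(X)}{\prod_j(X - \wp(a_j))} = \sum_i \frac{\wp^{\prime}(a_i)}{X - \wp(a_i)}\]
(last equality by partial fractions). The Taylor expansion of $\mathcal{R}$ at $X = \infty$ identifies (\ref{I-0500}) with $\deg \mathcal{N} \leq N - n_0$, while at $X = e_k$ it identifies the rewritten (\ref{I-0510}) with $(X - e_k)^{n_k} \mid \mathcal{N}(X)$. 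Matching degree and divisibility, (\ref{I-0500})--(\ref{I-0510}) is equivalent to $\mathcal{N}(X) = C\,Q(X)$ for some $C \in \mathbb{C}$. A direct computation using $P_i(X) = \prod_{j\neq i}(X - \wp(a_j))$ yields $\mathcal{N}(\wp(a_i)) = \wp^{\prime}(a_i) D_i$ and $\mathcal{N}^{\prime}(\wp(a_i)) = D_i \sum_{j \neq i}(\wp^{\prime}(a_i) + \wp^{\prime}(a_j))/(\wp(a_i) - \wp(a_j))$, where $D_i := \prod_{j\neq i}(\wp(a_i) - \wp(a_j)) \neq 0$. Hence the rewritten (\ref{I-63}) at index $i$ is precisely $E(\wp(a_i)) = 0$ for the polynomial $E(X) := \mathcal{N}^{\prime}(X) Q(X) - \mathcal{N}(X) Q^{\prime}(X)$ of degree $\leq 2N - n_0 - 2$, and the rewritten (\ref{I-64}) reads $\mathcal{N}(e_l) = 0$ for $n_l \geq 1$.

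The implication $\mathcal{N} = CQ \Rightarrow (\ref{I-63})$--(\ref{I-64}) is immediate, since then $E \equiv 0$ and $\mathcal{N}(e_l) = C\,Q(e_l) = 0$. For the converse, the crucial step is that $\mathcal{N}(e_l) = 0$ alone forces $(X - e_l)^{n_l} \mid E(X)$: writing $\mathcal{N}(X) = (X - e_l) M(X)$ and $Q(X) = (X - e_l)^{n_l}\tilde{Q}_l(X)$ with $\tilde{Q}_l(e_l) \neq 0$, expansion of $\mathcal{N}^{\prime} Q - \mathcal{N} Q^{\prime}$ yields
\[E(X) = (X - e_l)^{n_l}\bigl[(1-n_l)\tilde{Q}_l M + (X - e_l)(\tilde{Q}_l M^{\prime} - M \tilde{Q}_l^{\prime})\bigr].\]
Hence $Q \mid E$, so $\hat{E} := E/Q$ is a polynomial of degree $\leq (2N - n_0 - 2) - (N - n_0) = N - 2$. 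Since (\ref{I-0620}) makes $\wp(a_1), \ldots, \wp(a_N)$ pairwise distinct and $\hat E$ vanishes at each of them (as $E(\wp(a_i)) = 0$ and $Q(\wp(a_i)) \neq 0$), we obtain $\hat E \equiv 0$, hence $E \equiv 0$. Thus $(\mathcal{N}/Q)^{\prime} = E/Q^2 \equiv 0$ as a rational function, forcing $\mathcal{N}/Q$ to be a constant, i.e., $\mathcal{N} = CQ$. I expect the main technical obstacle to be the explicit local cancellation establishing $(X - e_l)^{n_l} \mid E$; the rest is elementary polynomial manipulation and degree counting.
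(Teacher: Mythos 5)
Your proof is correct and follows essentially the same route as the paper: your $\mathcal{N}$, $Q$ and $E=\mathcal{N}'Q-\mathcal{N}Q'$ are exactly the paper's $\psi$, $H$ and $\psi'H-\psi H'$, and the heart of the argument --- the divisibility $(X-e_l)^{n_l}\mid E$ forced by $\mathcal{N}(e_l)=0$, the degree count giving $E\equiv 0$ on the $N$ distinct points $\wp(a_i)$, and the conclusion $\mathcal{N}=CQ$ --- is the same. The only organizational difference is that the paper routes both sets of conditions through the vanishing orders of $f'(z)$ for the elliptic function $f(z)=e^{2z\sum_i\zeta(a_i)}\prod_i\sigma(z-a_i)/\prod_i\sigma(z+a_i)$, whereas you work directly with the expansions of $\mathcal{R}(X)=\sum_i\wp'(a_i)/(X-\wp(a_i))$ at $X=\infty$ and $X=e_k$; under the substitution $X=\wp(z)$ these are the same computations.
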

\begin{proof}
Define
\[f(z):=e^{2z\sum_{i=1}^{N} \zeta(a_i)}\frac{\prod_{i=1}^{N}\sigma(z- a_i)}{\prod_{i=1}^{N}\sigma(z+a_i)}.\]
First we claim that
\begin{equation}
\text{(\ref{I-63})-(\ref{I-64}) hold}\;\Longleftrightarrow\;\underset{z=\frac{\omega_{k}}{2}}{ord}f^{\prime}(z)=2n_{k}\text{ for all
}k. \label{kaxi}%
\end{equation}
With the help of Theorem \ref{thm4.1--1}, this claim can be proved by similar arguments as the sufficient part of Theorem \ref{thm-3-1}. We leave the details to the interested reader. Here we would like to give an elementary proof by using only the elliptic function theory but without using the ODE H$(\mathbf{n},B,\tau)$ and the Schwarzian derivative.

Recalling the addition formula
\begin{equation}
\zeta(u+v)+\zeta(u-v)-2\zeta(u)=\frac{\wp^{\prime}(u)}{\wp(u)-\wp(v)},
\label{81}%
\end{equation}
we
have{\allowdisplaybreaks%
\begin{align}
g(z)  & := \frac{f^{\prime}}{f}(z)=\sum_{i=1}^{N}\left(  \zeta
(z-a_{i})-\zeta(z+a_{i})+2\zeta(a_{i})\right) \label{800}\\
&  =\sum_{i=1}^{N}\frac{\wp^{\prime}(a_{i})}{\wp(z)-\wp(a_{i})}=\frac{\psi
(\wp(z))}{\prod_{i=1}^{N}(\wp(z)-\wp(a_{i}))},\nonumber
\end{align}
where}%
\begin{equation}
\psi(x):= \sum_{h=1}^{N}\wp^{\prime}(a_{h})\prod_{j\not =h}^{N}%
(x-\wp(a_{j})). \label{8000}%
\end{equation}
Clearly (\ref{I-64}) is equivalent to
\begin{equation}
\psi(e_{l})=g(\tfrac{\omega_l}{2})=0\text{ \ whenever }n_{l}\not =0\text{, }l=1,2,3.
\label{8002}%
\end{equation}

Notice from (\ref{I-0620}) that $g(z)$ is even elliptic with $2N$ simple poles $\pm a_{i},1\leq
i\leq N$. Since $f(\frac{\omega_{k}}{2})\not \in \{0,\infty \}$, it
is easy to see that ord$_{z=\frac{\omega_{k}}{2}}f^{\prime
}(z)=2n_{k}$ for all $k$ is
equivalent to saying that $\frac{\omega_{k}}{2}$ is a zero of $g(z)$ with
order $2n_{k}$ for all $k$ and so $g(z)$ has no other zeros (in particular, {\it $f'(z)$ has no other zeros}), namely
\begin{equation}
g(z)=d\frac{\prod_{k=1}^{3}(\wp(z)-e_{k})^{n_{k}}}{\prod
_{i=1}^{N}(\wp(z)-\wp(a_{i}))} \label{801}%
\end{equation}
for some constant $d\not =0$. For convenience, we define%
\begin{equation}
H(x):=  \prod_{k=1}^{3}(x-e_{k})^{n_{k}}.
\label{801-3}%
\end{equation}

First we prove the sufficient part of the claim (\ref{kaxi}). Suppose (\ref{801}) holds. Then
(\ref{8002}) gives (\ref{I-64}). By comparing (\ref{800}) and (\ref{801}%
)-(\ref{801-3}) we have%
\[
dH(x)=\psi(x)=\sum_{h=1}^{N}\wp^{\prime}(a_{h})\prod_{j\not =h}^{N}%
(x-\wp(a_{j})),\text{ \ }x=\wp(z).
\]
Taking derivative with respect to $x$ leads to{\allowdisplaybreaks%
\begin{equation}
dH^{\prime}(x)=\sum_{h=1}^{N}\wp^{\prime}(a_{h})\sum_{l\not =h}^{N}%
\prod_{j\not =h,l}^{N}(x-\wp(a_{j})),\text{ }\forall x\in \mathbb{C},
\label{803}%
\end{equation}
}where%
\begin{equation}
H^{\prime}(x)=H(x)\sum_{k=1}^{3}\frac{n_{k}%
}{x-e_{k}} . \label{803-2}%
\end{equation}
Now we fix any $i$. Then by letting $x=\wp(a_{i})$ in (\ref{803}), the RHS
becomes{\allowdisplaybreaks%
\begin{align*}
&  \sum_{h=1}^{N}\wp^{\prime}(a_{h})\sum_{l\not =h}^{N}\prod_{j\not =h,l}%
^{N}(\wp(a_{i})-\wp(a_{j}))\\
=  &  \wp^{\prime}(a_{i})\sum_{l\not =i}^{N}\prod_{j\not =i,l}^{N}(\wp
(a_{i})-\wp(a_{j}))+\sum_{h\not =i}^{N}\wp^{\prime}(a_{h})\sum_{l\not =h}%
^{N}\prod_{j\not =h,l}^{N}(\wp(a_{i})-\wp(a_{j}))\\
=  &  \wp^{\prime}(a_{i})\sum_{l\not =i}^{N}\prod_{j\not =i,l}^{N}(\wp
(a_{i})-\wp(a_{j}))+\sum_{h\not =i}^{N}\wp^{\prime}(a_{h})\prod_{j\not =%
h,i}^{N}(\wp(a_{i})-\wp(a_{j}))\\
=  &  \sum_{l\not =i}^{N}\left(  \wp^{\prime}(a_{i})+\wp^{\prime}%
(a_{l})\right)  \prod_{j\not =i,l}^{N}(\wp(a_{i})-\wp(a_{j})),
\end{align*}
}which gives
\begin{equation}
\frac{\sum_{h=1}^{N}\wp^{\prime}(a_{h})\sum_{l\not =h}^{N}\prod_{j\not =%
h,l}^{N}(\wp(a_{i})-\wp(a_{j}))}{\prod_{j\not =i}^{N}(\wp(a_{i})-\wp(a_{j}%
))}=\sum_{j\not =i}^{N}\frac{\wp^{\prime}(a_{i})+\wp^{\prime}(a_{j})}%
{\wp(a_{i})-\wp(a_{j})}. \label{804}%
\end{equation}
Together with (\ref{803}) and the addition formula%
\begin{equation}
\zeta(u+v)-\zeta(u)-\zeta(v)=\frac{1}{2}\frac{\wp^{\prime}(u)-\wp^{\prime}%
(v)}{\wp(u)-\wp(v)}, \label{622-1}%
\end{equation}
we obtain{\allowdisplaybreaks%
\begin{align}
\frac{dH^{\prime}(\wp(a_{i}))}{\prod_{j\not =i}^{N}(\wp(a_{i})-\wp(a_{j}))}
&  =\sum_{j\not =i}^{N}\frac{\wp^{\prime}(a_{i})+\wp^{\prime}(a_{j})}%
{\wp(a_{i})-\wp(a_{j})}\label{805}\\
&  =2\sum_{j\not =i}^{N}\left[  \zeta(a_{i}-a_{j})+\zeta(a_{j})-\zeta
(a_{i})\right]  .\nonumber
\end{align}
}By (\ref{800}) and (\ref{801})-(\ref{801-3}) again, we have%
\[
1=\underset{z=a_{i}}{\text{Res}}g(z)=\frac{1}{\wp^{\prime}(a_{i})}\frac
{dH(\wp(a_{i}))}{\prod_{j\not =i}^{N}(\wp(a_{i})-\wp(a_{j}))}.
\]
This, together with (\ref{805}), (\ref{803-2}) and the addition formula
(\ref{81}), gives{\allowdisplaybreaks%
\begin{align}
&  2\sum_{j\not =i}^{N}\left[  \zeta(a_{i}-a_{j})+\zeta(a_{j})-\zeta
(a_{i})\right] \label{806}\\
=  &  \frac{\wp^{\prime}(a_{i})H^{\prime}(\wp(a_{i}))}{H(\wp(a_{i}))}%
=\sum_{k=1}^{3}n_{k}\frac
{\wp^{\prime}(a_{i})}{\wp(a_{i})-e_{k}}\nonumber \\
=  &\sum_{k=1}^{3}n_{k}\left[  \zeta \left(  a_{i}+\tfrac{\omega_{k}}%
{2}\right)  +\zeta \left(  a_{i}-\tfrac{\omega_{k}}{2}\right)  -2\zeta
(a_{i})\right]  ,\nonumber
\end{align}
}namely (\ref{I-63}) holds.

Conversely, suppose (\ref{I-63})-(\ref{I-64}) hold, then (\ref{8002}) and
(\ref{806}) hold. By (\ref{804}), the second equality of (\ref{805}) and
(\ref{806}), it is easy to see that
\begin{equation}
\frac{\sum_{h=1}^{N}\wp^{\prime}(a_{h})\sum_{l\not =h}^{N}\prod_{j\not =%
h,l}^{N}(\wp(a_{i})-\wp(a_{j}))}{\prod_{j\not =i}^{N}(\wp(a_{i})-\wp(a_{j}%
))}\frac{H(\wp(a_{i}))}{\wp^{\prime}(a_{i})H^{\prime}(\wp(a_{i}))}=1
\label{807}%
\end{equation}
holds for all $i$. Define a polynomial%
\[
Q(x):= \psi^{\prime}(x)\cdot H(x)-\psi(x)\cdot H^{\prime}(x).
\]
Recalling (\ref{8000}), (\ref{8002}) and (\ref{801-3}), we easily obtain%
\[
\deg Q(x)\leq N-1+\sum_{k=1}^{3}n_{k}\text{ and }\left. \prod
_{k=1}^{3}(x-e_{k})^{n_{k}}\right \vert Q(x).
\]
Since (\ref{807}) just says $Q(\wp(a_{i}))=0$ for all $i$, we see that $Q(x)$
has at least $N+\sum_{k=1}^{3}n_{k}$ zeros, so $Q(x)\equiv0$. Consequently,
$\psi(x)=dH(x)$ for some constant $d\not =0$, i.e. (\ref{801}) holds, which
infers ord$_{z=\frac{\omega_{k}}{2}}f^{\prime}(z)=2n_{k}$ for all
$k$.
This proves the claim (\ref{kaxi}).

Thanks to (\ref{kaxi}), it suffices for us to prove
the equivalence between (\ref{I-0500})-(\ref{I-0510}) and ord$_{z=\frac{\omega_{k}}{2}}f^{\prime}(z)=2n_{k}$ for all
$k$. Clearly
(\ref{800}) gives%
\begin{align*}
\frac{f^{\prime}(z)}{f(z)}  &  =\sum_{i=1}^{N}\frac{\wp^{\prime}(a_{i})}%
{\wp(z)-\wp(a_{i})}=\frac{1}{\wp(z)}\sum_{i=1}^{N}\frac{\wp^{\prime}(a_{i}%
)}{1-\frac{\wp(a_{i})}{\wp(z)}}\\
&  =\sum_{l=0}^{\infty}\frac{\sum_{i=1}^{N}\wp^{\prime}(a_{i})\wp(a_{i})^{l}%
}{\wp(z)^{l+1}}.
\end{align*}
Therefore, ord$_{z=0}f^{\prime}(z)=2n_{0}$ if and only if
(\ref{I-0500}) holds. Similarly, for $k\in \{1,2,3\}$, by using the addition
formula%
\[
\wp(z-\tfrac{\omega_{k}}{2})-e_{k}=\frac{\mu_{k}}{\wp(z)-e_{k}},\quad
\mu_{k}=\frac{1}{2}\wp^{\prime \prime}(\tfrac{\omega_{k}}{2})\not =0,
\]
we easily obtain
{\allowdisplaybreaks
\begin{align*}
&  \frac{f^{\prime}(z)}{f(z)}-\sum_{i=1}^{N}\frac{\wp^{\prime}(a_{i})}%
{e_{k}-\wp(a_{i})}\\
=  &  \sum_{i=1}^{N}\left(  \frac{\wp^{\prime}(a_{i})}{\wp(z)-\wp(a_{i}%
)}+\frac{\wp^{\prime}(a_{i})}{\wp(a_{i})-e_{k}}\right)
=    \sum_{i=1}^{N}\frac{\wp^{\prime}(a_{i})}{(\wp(a_{i})-e_{k})(1-\frac
{\wp(a_{i})-e_{k}}{\wp(z)-e_{k}})}\\
=  &  \sum_{i=1}^{N}\frac{\wp^{\prime}(a_{i})}{(\wp(a_{i})-e_{k})(1-\frac
{\wp(a_{i})-e_{k}}{\mu_{k}}(\wp(z-\tfrac{\omega_{k}}{2})-e_{k}))}\\
=  &  -\mu_{k}\sum_{i=1}^{N}\frac{\wp^{\prime}(a_{i})}{(\wp(a_{i})-e_{k}%
)^{2}(\wp(z-\tfrac{\omega_{k}}{2})-e_{k})(1-\frac{\mu_{k}}{(\wp(a_{i}%
)-e_{k})(\wp(z-\tfrac{\omega_{k}}{2})-e_{k})})}\\
=  &  -\sum_{l=1}^{\infty}\frac{\mu_{k}^{l}\sum_{i=1}^{N}\frac{\wp^{\prime
}(a_{i})}{(\wp(a_{i})-e_{k})^{l+1}}}{(\wp(z-\tfrac{\omega_{k}}{2})-e_{k})^{l}%
}\\
=&-\sum_{l=1}^{\infty}\frac{\mu_{k}^{l}\frac{\sum_{i=1}^{N}\wp^{\prime}(a_{i})\prod_{j=1,\neq i}^{N}(\wp(a_{j})-e_{k})^{l+1}}{\prod_{i=1}^{N}(\wp(a_{i})-e_{k})^{l+1}}}{(\wp(z-\tfrac{\omega_{k}}{2})-e_{k})^{l}%
}.
\end{align*}
}Therefore, ord$_{z=\frac{\omega_{k}}{2}}f^{\prime}(z)=2n_{k}$ if
and only if (\ref{I-0510}) holds.

The proof is complete.
\end{proof}

\bigskip

{\bf Acknowledgements}
The research of the first author was supported by NSFC (No. 11701312).

\end{document}